\documentclass[11pt,reqno]{amsart}

\usepackage[T1]{fontenc}
\usepackage[utf8]{inputenc}
\usepackage{lmodern}
\usepackage{microtype}
\usepackage{amsmath,amssymb,mathtools}
\usepackage{amsthm}
\usepackage{aliascnt}
\usepackage{enumitem}
\usepackage{xcolor}
\usepackage{xurl}
\usepackage{hyperref}
\usepackage[nameinlink,capitalize,noabbrev]{cleveref}

\hypersetup{
  colorlinks=true,
  linkcolor=blue!55!black,
  citecolor=green!35!black,
  urlcolor=blue!55!black,
  pdftitle={Uniform Comparison of Hyperbolic Ball Volumes on the Universal Cover},
  pdfauthor={Heng Zhang}
}

\newtheorem{theorem}{Theorem}[section]
\newaliascnt{proposition}{theorem}
\newtheorem{proposition}[proposition]{Proposition}
\newaliascnt{conjecture}{theorem}
\newtheorem{conjecture}[conjecture]{Conjecture}
\aliascntresetthe{conjecture}
\aliascntresetthe{proposition}
\newaliascnt{lemma}{theorem}
\newtheorem{lemma}[lemma]{Lemma}
\aliascntresetthe{lemma}
\newaliascnt{corollary}{theorem}

\aliascntresetthe{corollary}
\newaliascnt{claim}{theorem}

\aliascntresetthe{claim}
\theoremstyle{definition}
\newaliascnt{definition}{theorem}

\aliascntresetthe{definition}
\newaliascnt{convention}{theorem}
\newtheorem{convention}[convention]{Convention}
\aliascntresetthe{convention}
\theoremstyle{remark}
\newaliascnt{remark}{theorem}
\newtheorem{remark}[remark]{Remark}
\aliascntresetthe{remark}

\DeclareMathOperator{\Vol}{Vol}
\DeclareMathOperator{\Area}{Area}
\DeclareMathOperator{\diam}{diam}
\DeclareMathOperator{\sys}{sys}
\DeclareMathOperator{\Lip}{Lip}
\DeclareMathOperator{\dist}{dist}

\newcommand{\R}{\mathbb{R}}

\newcommand{\Hh}{\mathbb{H}}
\newcommand{\cN}{\mathcal{N}}
\newcommand{\cP}{\mathcal{P}}

\newcommand{\norm}[1]{\left\lVert #1\right\rVert}
\newcommand{\simp}[1]{\left\lVert #1\right\rVert_{\Delta}}
\newcommand{\abs}[1]{\left\lvert #1\right\rvert}
\newcommand{\set}[1]{\left\{#1\right\}}
\newcommand{\brak}[1]{\left[#1\right]}
\newcommand{\paren}[1]{\left(#1\right)}
\newcommand{\eps}{\varepsilon}
\newcommand{\whU}{\widehat U}
\newcommand{\whS}{\widehat S}

\title[Comparison Hyperbolic Ball on the Universal Cover]{Uniform Comparison of Hyperbolic Ball Volumes on the Universal Cover}
\author{Heng Zhang}
\address{School of Mathematical Sciences, University of Science and Technology of China, Hefei, China}
\email{hengz@mail.ustc.edu.cn}
\date{July 2026}
\subjclass[2020]{53C23, 57R19, 20F69}
\keywords{simplicial volume, universal cover, volume growth, systole, separating filtration, padded partition}

\begin{document}

\begin{abstract}
Let $\simp{M}$ denote the simplicial volume of $M$,
$V_r(X,h)=\sup_{x\in X}\Vol_h(B_h(x,r))$, and
$\Hh^n$ denotes hyperbolic $n$-space. We prove that, if a closed oriented $n$-manifold $M$ admits a hyperbolic metric, then there is a dimensional constant $\delta_n>0$ such that every Riemannian metric $g$ on $M$ with
$$
  \frac{\Vol_g(M)}{\simp{M}}<\delta_n
$$
satisfies
$$
  V_r(\widetilde M,\widetilde g)\ge V_r(\Hh^n)
  \quad\text{for every }r\ge 1.
$$
\end{abstract}

\maketitle

\section{Introduction}

\subsection{Background}

For a Riemannian manifold $(X,h)$ and a radius $r>0$, set
$$
  V_r(X,h)=\sup_{x\in X}\Vol_h\bigl(B_h(x,r)\bigr),
$$
and let $\sys(X,h)$ denote the infimum of the lengths of non-null-homotopic
loops, with the convention $\sys(X,h)=\infty$ when $X$ is simply connected.
We write $V_r(X)$ and $\sys(X)$ when the metric is understood.  If $M$ is a
closed oriented connected manifold, we denote its Gromov simplicial volume by
$\simp{M}$.  When $g$ is a Riemannian metric on $M$, we write
$(\widetilde M,\widetilde g)$ for the universal cover equipped with the lifted
metric.

The problem considered here is to find geometric hypotheses ensuring that, when
$M$ admits a hyperbolic metric, balls in $(\widetilde M,\widetilde g)$ are at
least as large as balls of the same radius in hyperbolic space.  

Sabourau proved an all-scale comparison under an absolute small volume
assumption \cite{Sabourau}.  More
precisely, 

\begin{theorem}
For every $n\ge2$ there exists a constant $\delta_n>0$ such that, if
$M$ is a closed oriented connected $n$-manifold admitting a hyperbolic metric
and $g$ is another Riemannian metric on $M$ satisfying
$$
  \Vol_g(M)<\delta_n,
$$
then
\begin{equation}\label{eq:sabourau-intro}
  V_r(\widetilde M,\widetilde g)\ge V_r(\Hh^n)
  \qquad\text{for every }r\ge1.
\end{equation}    
\end{theorem}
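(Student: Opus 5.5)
We argue by contradiction, reducing the statement to Gromov's inequality relating simplicial volume to volume growth. Suppose some $r_0\ge1$ satisfies
$$
  V_{r_0}(\widetilde M,\widetilde g)<V_{r_0}(\Hh^n).
$$
The quantity $V_{r_0}(\Hh^n)$ depends only on $n$ and $r_0$. The plan is to show that this upper bound on ball volumes in the universal cover, together with $\Vol_g(M)<\delta_n$, forces $\simp{M}$ to be too small. This would contradict the lower bound $\simp{M}\ge c_n>0$, valid for every closed hyperbolic manifold: by Gromov--Thurston, $\simp{M}=\Vol_{hyp}(M)/v_n$, and hyperbolic volumes are bounded below by a dimensional constant (Kazhdan--Margulis, or Jørgensen--Thurston when $n=3$).

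\textbf{Step 1: a bound from small balls.} Gromov's Main Inequality in its local form states the following. If every ball of radius $R$ in $\widetilde M$ has volume at most $\epsilon_n R^n$, with $\epsilon_n$ a small dimensional constant, then $\simp{M}\le C_n\Vol_g(M)/R^n$. The idea is to build a nerve of a good cover by balls of radius comparable to $R$, of multiplicity at most $n+1$, and push a fundamental cycle into that nerve.

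\textbf{Step 2: ensuring the smallness hypothesis.} For the bound above to contradict hyperbolicity, we need smallness at some scale $R$. At scale $R=r_0$ we only know that ball volumes are below $V_{r_0}(\Hh^n)$, which grows exponentially in $r_0$, so it is not small compared with $r_0^n$. Instead we choose the scale from the total volume. Since $\Vol_g(M)<\delta_n$, take $R=1$. Balls in $\widetilde M$ of radius $1$ either project injectively or wrap around a short loop.

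\textbf{Step 3: the case of large systole.} If $\sys(M,g)\ge2$, then the unit balls upstairs project injectively, so each has volume at most $\Vol_g(M)<\delta_n\le\epsilon_n$. Step 1 then gives $\simp{M}\le C_n\delta_n$, contradicting $\simp{M}\ge c_n$ once $\delta_n$ is small.

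\textbf{Step 4: the case of short systole (main obstacle).} In general short loops exist, and a unit ball in $\widetilde M$ may be large even though $M$ has small volume. Here we use the failure hypothesis together with the exponential volume growth of $\pi_1(M)$: it is a cocompact lattice in $\mathrm{Isom}(\Hh^n)$ and hence non-elementary. We run Gromov's filling/covering argument at scale $r_0$, using the upper bound $V_{r_0}(\Hh^n)$ on ball volumes, and compare the number of orbit points in a ball with the volume of hyperbolic balls. The key comparison is Gromov's estimate
$$
  \simp{M}\le C_n\,\frac{\Vol_g(M)}{V_{r}(\widetilde M)}\cdot(\text{growth factor}),
$$
which must be made sharp enough that the hyperbolic ball volume appears exactly. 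I expect this sharp matching of constants to be the hardest step. A plausible route is Sabourau's approach: a Margulis-lemma type decomposition of $M$ into thick and thin parts, with covering arguments giving
$$
  \Vol B(\tilde x,r)\ge\#\{\gamma:\ d(\tilde x,\gamma\tilde x)\le r-1\}\cdot\Vol(\text{local piece}),
$$
combined with the fact that the orbit counting function of a hyperbolic lattice dominates $V_r(\Hh^n)/\Vol_{hyp}(M)$.
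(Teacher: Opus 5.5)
\paragraph{Verdict: the opening reduction is right, but the core step is missing.}
Your opening reduction is the right one. A uniform bound $\simp{M}\ge c_n>0$, from Gromov--Thurston proportionality plus the Kazhdan--Margulis lower bound on hyperbolic volumes, turns any inequality $\simp{M}\le C_n'\Vol_g(M)$ derived from a bad radius into the theorem. The paper does not prove this statement itself; it quotes it from Sabourau. It does follow in exactly this way from \cref{thm:universal-cover-intro}.

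The gap is that you never derive $\simp{M}\le C_n'\Vol_g(M)$ from the failure hypothesis.
\begin{itemize}
\item \textbf{Step~1 is never triggered.} Since $\sinh t\ge t$, one has $V_R(\Hh^n)\ge\omega_{n-1}R^n/n$, the Euclidean ball volume. For $R\le r_0$, the only information is $V_R(\widetilde M,\widetilde g)\le V_{r_0}(\widetilde M,\widetilde g)<V_{r_0}(\Hh^n)$. That is compatible with $V_R(\widetilde M,\widetilde g)>\epsilon_nR^n$. For $R>r_0$ you know nothing.
\item \textbf{Step~3 is correct but idle.} It never uses the bad radius; it only shows that $\Vol_g(M)<\delta_n$ forces $\sys(M,g)<2$.
\item \textbf{Step~4 carries everything and is a plan, not an argument.} The displayed estimate with a ``growth factor'' is not an actual inequality, and the orbit-counting route meets concrete obstructions:
\begin{itemize}
\item The orbit count of $\pi_1(M)$ in $(\widetilde M,\widetilde g)$ is tied to the hyperbolic orbit count only through a quasi-isometry whose constants depend on $g$, so the two cannot be compared at the same radius.
\item $\diam(M,g)$ is uncontrolled, so a $\widetilde g$-ball of radius $r$ need not contain even one fundamental domain.
\item There is no lower bound on the volume of a ``local piece'' of a manifold of volume $<\delta_n$ without injectivity-radius control.
\item A Margulis thick--thin decomposition is unavailable for a metric without curvature bounds.
\end{itemize}
\end{itemize}

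\paragraph{The missing ingredient.}
What is needed is a finite-scale substitute for Step~1 whose dependence on the ball-volume ratio is only polylogarithmic, namely \cref{thm:finite-scale-intro}:
\[
  \simp{N}\le C_n\frac{\Vol(N)}{R^n}\brak{\log\paren{e+\frac{V_R(N)}{R^n}}}^n
  \qquad\text{for }0<R<\tfrac12\sys(N).
\]
The short-systole case is then handled by passing to a cover, not by a case split.
\begin{enumerate}
\item $\pi_1(M)$ is residually finite (Mal'cev), so \cref{lem:large-systole} gives a finite cover $N\to M$ of some degree $d$ with $\sys(N)>2r_0$.
\item \cref{lem:balls-lift} gives $V_{r_0}(N)=V_{r_0}(\widetilde M,\widetilde g)<V_{r_0}(\Hh^n)\le b_ne^{(n-1)r_0}$, so the logarithm is at most $K_nr_0$ and the factor $r_0^n$ cancels.
\item Simplicial volume and Riemannian volume are both multiplicative under finite covers, which yields $\simp{M}\le C_nK_n^n\Vol_g(M)$.
\item With your lower bound this gives the theorem for $\delta_n=c_n/(C_nK_n^n)$.
\end{enumerate}
The mechanism is that $\log V_r(\Hh^n)$ grows only linearly in $r$, which an estimate of the form $[\log(\cdot)]^n/R^n$ absorbs exactly. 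Neither Gromov's small-ball inequality nor the volume-entropy inequality, which only sees $r\to\infty$, can exploit failure at a single radius.
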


Thus Sabourau obtained the desired conclusion simultaneously at every scale
$r\ge1$, but under a hypothesis involving the absolute volume of $(M,g)$.

Guth proved a complementary result in which the hypothesis is normalized by
simplicial volume, but the conclusion is obtained at one fixed scale
\cite[Theorem~2]{Guth}.  

\begin{theorem}[{\cite{Guth}}]
For every $n \ge 2$, there exists $\delta_n > 0$ such that
if $M$ is a closed, oriented, connected $n$-dimensional manifold admitting a hyperbolic metric,
and $g$ is another Riemannian metric on $M$ with
\begin{equation}\label{eq:guth-fixed-scale-hypothesis}
  \frac{\Vol_g(M)}{\|M\|_\Delta}<\delta_n,
\end{equation}
then 
$$
V_1(\widetilde{M}) \ge V_1(\mathbb{H}^n).
$$
\end{theorem}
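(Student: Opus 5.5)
Guth's theorem is a cited result, so my plan reconstructs his argument at the single scale $r=1$ and does not use anything from the present paper. I argue by contradiction. Suppose $V_1(\widetilde M,\widetilde g)<V_1(\Hh^n)$. Then every unit ball in $\widetilde M$ has volume strictly below $V_1(\Hh^n)$. The goal is to build a map from $M$ to a simplicial complex of dimension at most $n-1$ that kills $\simp{M}$, or else to bound $\simp{M}$ by a constant times $\Vol_g(M)$. For the latter I would take $\delta_n$ to be the reciprocal of that constant. The engine is Gromov's vanishing/amenable-cover principle: if $M$ has a cover by open sets whose images in $\pi_1(M)$ are amenable (here, trivial) and whose multiplicity is at most $n$, then $\simp{M}=0$. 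Its quantitative version, via the nerve map, bounds $\simp{M}$ by the number of $n$-simplices of the nerve that the fundamental class actually uses.

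\textbf{Step 1 (good covering).} Use the small-ball hypothesis on $\widetilde M$ to construct a cover of $M$ by sets $U_i$ of diameter $<1/2$. Each $U_i$ must lift isometrically, so that $\pi_1(U_i)\to\pi_1(M)$ is trivial. One might worry about short loops in $M$ here; they are harmless, because small sets in $M$ whose lifts have diameter $<1/2$ still have trivial image, and the relevant geometry lives in $\widetilde M$. Construct the cover following Guth's ``rich/poor balls'' scheme. Call a ball $B(x,\rho)$ in $\widetilde M$ \emph{good} if its volume is comparable to that of the concentric ball of radius $\rho/10$; such a ball is doubling at that scale. Choose the radii $\rho(x)\le 1/2$ by a stopping time. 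The volume deficit $\Vol B(\tilde x,1)<V_1(\Hh^n)$ forces a radius at which the ball is ``rich'', namely with $\Vol B(\tilde x,\rho)\gtrsim c_n\rho^n$ relative to the hyperbolic comparison. This is the key point: in $\Hh^n$ at scale $1$, a large amount of volume is needed to sustain a nontrivial topological filling.

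\textbf{Step 2 (nerve and counting).} Pick a Vitali-type subcover. The good balls have bounded overlap of their $1/10$-shrinkings, so the number of $n$-simplices of the nerve, weighted appropriately, is $\lesssim \sum_i 1 \lesssim \Vol_g(M)$. This uses that each chosen ball carries volume bounded below by a dimensional constant on the region where the cover has multiplicity $>n$. Concretely, I would straighten the fundamental class through the nerve map $\Phi\colon M\to N$. Then $\simp{M}\le \#\{n\text{-simplices of }N\text{ hit with positive degree}\}$, and a coarea/deformation argument pushes $\Phi$ into the $(n-1)$-skeleton except near the rich balls.

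\textbf{Main obstacle.} The hard part is the lower volume bound for the balls that carry $n$-simplices, i.e.\ showing that each ball where the cover must have multiplicity $n+1$ has volume at least $c_n>0$. This is exactly where the hypothesis is tied to $V_1(\Hh^n)$. If one only wanted some constant, the contradiction hypothesis would be used softly. The sharp comparison with $V_1(\Hh^n)$ instead needs the fact that the hyperbolic metric realizes the extremal ratio, via Gromov--Thurston $\simp{M}=\Vol_{\mathrm{hyp}}(M)/v_n$. My first attempt would follow Guth's proof: run the argument with a scale parameter, and compare the counting constant against $V_1(\Hh^n)/v_n$ by taking finite covers of $M$. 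Both sides are multiplicative under finite covers, so the ratio is preserved. Large-systole covers then let me localize, and I would choose $\delta_n$ small enough to absorb all dimensional constants.
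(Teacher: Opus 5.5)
Your outline never supplies the estimate on which everything rests, and the mechanism you sketch for it does not work. Whatever covering or coloring you use, you must show that each top-dimensional piece contributing to the simplicial norm costs a definite amount of volume. Concretely, you must prove $\simp{N}\le C(n,V)\Vol(N)$ whenever unit balls have volume at most $V$ and trivial $\pi_1$-image. In Step~2, bounded overlap of the shrunken balls gives $\sum_i\Vol(\tfrac1{10}B_i)\lesssim\Vol_g(M)$, not $\sum_i1\lesssim\Vol_g(M)$. Passing from one to the other requires a uniform volume floor for every selected ball that carries an $n$-simplex. That is exactly what you call the unresolved main obstacle, and you then defer it to ``following Guth's proof''. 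Step~1 does not produce such a floor. The upper bound $\Vol B(\tilde x,1)<V_1(\Hh^n)$ cannot force any lower volume bound. The condition $\Vol B(\tilde x,\rho)\gtrsim c_n\rho^n$ holds automatically for all small $\rho$, so it carries no information. And stopping-time radii may be arbitrarily small. The coarea/deformation step that pushes the nerve map into the $(n-1)$-skeleton away from rich balls is the real content of the theorem, and you only name it.

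The paper gets the missing estimate by a different mechanism. The statement is cited there, but it is the case $r=1$ of \cref{thm:universal-cover-intro}. On a cover $N$ with $\sys(N)>2$, the paper chooses a filtration by near-minimizing $1$-separating subpolyhedra and bounds $\simp{N}\le2^n\#Z_0$ by amenable reduction (\cref{prop:zero-stratum}). Near-minimality is exploited by replacing the separator inside $\cN_t^N(A)$ with a level set of an approximate distance function (\cref{lem:cluster-slicing}). This yields $\#A\,t^n/n!\le\Vol(\cN_t^N(A))+\eta$ (\cref{lem:cluster-zero}), so every zero-stratum point, or cluster of such points, owns volume $\gtrsim t^n$. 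A local cardinality bound and the padded partition then give $\#Z_0\lesssim_n[\log(e+V_1(N))]^n\Vol(N)$, which is \cref{thm:finite-scale-intro} at $R=1$.

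Two further points would derail you. First, your main obstacle is misdiagnosed: no extremality of the hyperbolic metric is needed. $V_1(\Hh^n)$ is just a dimensional constant, so any bound $\simp{N}\le C(n,V)\Vol(N)$ applied with $V=V_1(\Hh^n)$ gives the theorem with $\delta_n=1/C(n,V_1(\Hh^n))$. In the paper the logarithmic factor is simply bounded by $\log(e+V_1(\Hh^n))$. The hyperbolic structure is used only to get $\simp{M}>0$ and residual finiteness of $\pi_1(M)$.

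Second, short loops are not harmless. Your hypotheses say nothing about $\sys(M,g)$. A subset of $M$ of diameter $<1/2$ that contains a noncontractible loop neither lifts isometrically nor has trivial $\pi_1$-image, and speaking of ``its lifts'' of diameter $<1/2$ is then meaningless. The large-systole cover you mention only at the end must come first. Residual finiteness gives a finite cover $N$ with $\sys(N)>2$ (\cref{lem:large-systole}). On $N$, unit balls have trivial $\pi_1$-image (\cref{lem:ball-pi1}) and $V_1(N)=V_1(\widetilde M)$ (\cref{lem:balls-lift}). Volume and simplicial volume are both multiplicative under finite covers, so the ratio hypothesis passes to $N$.
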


The ratio in \eqref{eq:guth-fixed-scale-hypothesis} is invariant under finite
coverings, which makes it the natural hypothesis for a covering-space argument.

The strongest conjectural statement in this direction is Guth's
Conjecture \cite[Conjecture~2]{Guth}.  Let $g_{\mathrm{hyp}}$ be a
hyperbolic metric on $M$.  Guth conjectured that the strict volume inequality
\begin{equation}\label{eq:guth-conjecture-two-volume}
  \Vol_g(M)<\Vol_{g_{\mathrm{hyp}}}(M)
\end{equation}
implies the strict ball-volume comparison
\begin{equation}\label{eq:guth-conjecture-two-comparison}
  V_r(\widetilde M,\widetilde g)>V_r(\Hh^n)
  \qquad\text{for every }r>0.
\end{equation}
This conjecture strengthens the theorem of Besson--Courtois--Gallot, which,
under the same volume hypothesis, gives the comparison in
\eqref{eq:guth-conjecture-two-comparison} for all sufficiently large radii,
with the threshold allowed to depend on the metric $g$ \cite{BCG}.

A natural intermediate problem, also suggested by Guth \cite[P. 54]{Guth}, is to retain the
covering-invariant linear hypothesis from his fixed-scale theorem while
replacing its radius-one conclusion by the scale-uniform conclusion of
Sabourau's theorem.  In other words, one asks whether the condition
$$
  \frac{\Vol_g(M)}{\simp{M}}<\delta_n
$$
is sufficient to guarantee the hyperbolic ball-volume comparison for every
$r\ge1$.  This is the content of the following conjecture.

\begin{conjecture}[\cite{Guth}]\label{conj}
 For every $n\ge2$, there exists $\delta_n>0$ such that, if $M$ is a closed
oriented connected $n$-dimensional manifold admitting a hyperbolic metric and
$g$ is another Riemannian metric on $M$ with
\begin{equation}\label{eq:conjecture-four-hypothesis}
  \frac{\Vol_g(M)}{\simp{M}}<\delta_n,
\end{equation}
then
\begin{equation}\label{eq:conjecture-four-conclusion}
  V_r(\widetilde M,\widetilde g)\ge V_r(\Hh^n)
  \qquad\text{for every }r\ge1.
\end{equation}   
\end{conjecture}

The case $n=2$ was proved by Karam \cite{Karam}.  In arbitrary dimension,
Alpert recently established the all-scale conclusion under the stronger quadratic
hypothesis \cite[Theorem~1]{AlpertGrowth}: for every $n\ge2$, there exists
$\delta_n>0$ such that
\begin{equation}\label{eq:alpert-hypothesis}
  \frac{\Vol_g(M)^2}{\simp{M}}<\delta_n
\end{equation}
implies
\begin{equation}\label{eq:alpert-conclusion}
  V_r(\widetilde M,\widetilde g)\ge V_r(\Hh^n)
  \qquad\text{for every }r\ge1.
\end{equation}
Alpert's result therefore recovers the desired conclusion at every radius, but
the quadratic dependence on $\Vol_g(M)$ does not give
\eqref{eq:conjecture-four-hypothesis}.  To bridge this gap, Alpert proposed a
finite-scale logarithmic estimate for simplicial volume
\cite[Question~13]{AlpertGrowth}.  The main result of the present paper proves
a regularized form of that estimate.

\subsection{Main Result}

\begin{theorem}[Finite-scale logarithmic simplicial-volume estimate]
\label{thm:finite-scale-intro}
For every integer $n\ge2$, there exists a constant $C_n>0$ such that every
closed oriented connected Riemannian $n$-manifold $N$ satisfies
\begin{equation}\label{eq:finite-scale-intro}
  \simp{N}
  \le
  C_n\frac{\Vol(N)}{R^n}
  \brak{\log\paren{e+\frac{V_R(N)}{R^n}}}^{n}
\end{equation}
for every
$$
  0<R<\frac12\sys(N).
$$
\end{theorem}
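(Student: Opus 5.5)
We prove \eqref{eq:finite-scale-intro} in the style of Gromov--Guth: build a nerve-type map $N\to K$ into a simplicial complex of dimension at most $n$, factor the fundamental class through $K$, and count top simplices. The keywords (separating filtration, padded partition) indicate the mechanism, and the logarithm will come from the padding probability of a random partition.

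First, by scaling we reduce to $R=1$ with $\sys(N)>2$. Every ball of radius $<1$ then lifts isometrically to $\widetilde N$, so $V_1(N)$ is also a bound on ball volumes in $\widetilde N$. Set $L=\log(e+V_1(N))$.

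Second, we construct the cover.
\begin{enumerate}[label=(\roman*)]
\item Choose a maximal $\rho$-separated net $\{x_i\}$ with $\rho\approx 1/L$. Each ball $B(x_i,\rho/2)$ has volume at least $c_n\rho^n$: locally the balls are embedded, and a standard argument (Guth's good-ball selection, or restricting to balls of controlled volume ratio) supplies this lower bound.
\item Partition $N$ with a random padded partition at scale $1$, using exponential radii in the manner of Calinescu--Karloff--Rabani or Miller--Peng--Xu. A point is $\rho$-padded with probability at least $1-O(\rho\log(\text{local growth}))$.
\item The local growth is bounded by the number of net points in a $1$-ball, which is at most $V_1/(c\rho^n)$. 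Its logarithm is therefore $\lesssim L+n\log L\lesssim L$.
\item Taking $\rho\sim 1/L$ makes a constant fraction of points padded.
\item Iterate this $n+1$ times at nested scales $1,2^{-1},\dots$ to obtain a separating filtration, so that every point is padded at some level.
\end{enumerate}

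Third, we pass to the nerve. The padded pieces give a cover by sets of diameter $<1<\sys/2$ whose nerve $K$ has multiplicity at most $n+1$. We arrange this via the filtration: at each level, pieces are pairwise disjoint after shrinking. The resulting map $\phi:N\to K$ is $\pi_1$-trivial on each piece, since the pieces are small. By the Gromov--Guth straightening argument, $[N]$ pushes forward so that $\simp{N}$ is at most a dimensional constant times the number of $n$-simplices of $K$ meeting $\phi(N)$.

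Fourth, we count those simplices. Each $n$-simplex corresponds to an $(n+1)$-fold intersection of pieces at scale $\rho$. Charge it to a net point within distance $\rho$. The number of such simplices at one net point is bounded by $\exp(O(1))$, using the padding and the at most $(n+1)$ levels. The number of net points is at most $\Vol(N)/(c_n\rho^n)\approx \Vol(N)\,L^n$. This gives $\simp{N}\le C_n\Vol(N)L^n$, which is \eqref{eq:finite-scale-intro} at $R=1$.

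The main obstacle is the first step: guaranteeing that the net balls at scale $\rho\sim1/L$ carry volume $\gtrsim\rho^n$. Small balls of an arbitrary metric can be very thin. We would first try Guth's trick of replacing the fixed $\rho$ by variable radii, chosen so that each ball is comparable in volume to its concentric double. The padded-partition analysis would then be run on this variable-radius cover, and the logarithm of the growth ratio bounded by $L$ via a pigeonhole over $O(L)$ dyadic scales between $\rho$ and $1$. Making the multiplicity bound of the third step compatible with these variable radii is where the regularization is needed.
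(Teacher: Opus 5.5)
There is a genuine gap, and it sits where you suspect: step (i). For an arbitrary metric with $\sys(N)>2$, there is no lower bound $\Vol B(x_i,\rho/2)\ge c_n\rho^n$ at the uniform scale $\rho\sim 1/L$. The systole hypothesis only makes small balls $\pi_1$-trivial; it gives no non-collapsing. For example, a long thin finger (a capped $S^{n-1}_\epsilon\times[0,1]$) creates no short essential loops, yet balls of radius $\rho\gg\epsilon$ centred in it have volume of order $\epsilon^{n-1}\rho$.

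Both of your counts rest on this bound: the growth estimate $V_1/(c\rho^n)$ in (iii) and the net count $\Vol(N)/(c_n\rho^n)$ in your fourth step. The repair you sketch via Guth-type good balls does not supply it. A ball comparable in volume to its double can have arbitrarily small radius and volume, so the number of such balls is not controlled by $\Vol(N)L^n$. A pigeonhole over $O(L)$ dyadic scales gives each point its own doubling scale, not the common scale your counting needs. So this obstacle is the substance of the theorem, not a regularization detail to handle afterwards.

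There are secondary gaps as well. Padding each point with probability $\ge\frac12$ in each of $n+1$ random partitions does not make every point padded at some level. Hence the multiplicity-$(n+1)$ cover with a usable nerve map is not established. The bound $\exp(O(1))$ on simplices per net point is also unexplained.

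The missing idea is that ``separating filtration'' should mean a chain $N=Z_n\supset\cdots\supset Z_0$ of \emph{near-minimizing} $R$-separating codimension-one subpolyhedra, not a stack of partitions. Its minimality supplies a non-collapsing bound exactly where one is needed, namely around the finite set $Z_0$. And it is $Z_0$, not a net, that controls $\simp{N}$.

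The paper proceeds in four steps:
\begin{enumerate}[label=(\alph*)]
\item It gets $\simp{N}\le 2^n\#Z_0$ from the filtration coloring and Gromov's amenable reduction, using only that $R$-balls are $\pi_1$-trivial. No nerve, multiplicity or coefficient bookkeeping is involved.
\item A cluster version of the Papasoglu--Alpert slicing lemma does the volume work. It replaces the part of $Z_i$ lying in a sublevel set $\{\ell<t\}$, where $\ell$ is a facewise-affine approximation of $\dist(\cdot,A)$, by the level set $\{\ell=t\}$. Iterated through the strata, this gives $\#A\,t^n/n!\le\Vol(\cN_t(A))+\eta$ for every $A\subset Z_0$ whose closed $t$-neighborhood lies in an $R$-ball.
\item Applied to $A=Z_0\cap B(x,R/8)$, this bounds the local cardinality of $Z_0$ by $D\approx n!8^n(1+V_R(N)/R^n)$. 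That bound feeds a random padded partition of the finite metric space $Z_0$ (not of $N$) at scale $R/8$. The padding radius is $4\tau$, with $\tau=R/(512L)$ and $L=\max\{1,\log D\}$.
\item Padded cores of distinct clusters are $4\tau$ apart, so their $\tau$-neighborhoods are disjoint. The cores contain at least half of $Z_0$, so summing the cluster estimate over them gives $\#Z_0\le 4n!\Vol(N)/\tau^n$, which is $\lesssim_n\Vol(N)L^n/R^n$.
\end{enumerate}
The cluster form is essential: points of $Z_0$ within one core need not be $2\tau$-separated, so a single-point volume estimate would not sum.
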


As an application, the finite-scale estimate implies Conjecture \ref{conj} via standard residual finiteness and large systole finite covering arguments. For completeness, the deduction uses the following standard mechanism.  For a
fixed radius $r\ge1$, residual finiteness of the fundamental group of a closed
hyperbolic manifold provides a finite cover $N\to M$ with
$\sys(N)>2r$.  Radius-$r$ balls in $N$ then lift isometrically to the universal
cover.  Applying \cref{thm:finite-scale-intro} to $N$ at scale $r$, using the
multiplicativity of Riemannian volume and simplicial volume under finite
coverings, and comparing with the exponential upper bound for
$V_r(\Hh^n)$ gives a dimensional lower bound for
$\Vol_g(M)/\simp{M}$ whenever the desired ball comparison fails.  Choosing
$\delta_n$ below this lower bound proves
\cref{thm:universal-cover-intro}. We record the
consequence as follows.

\begin{theorem}
\label{thm:universal-cover-intro}
Conjecture \ref{conj} holds true.
\end{theorem}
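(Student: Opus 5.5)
We deduce \cref{thm:universal-cover-intro} from \cref{thm:finite-scale-intro} by contradiction at a fixed radius, so we may take \cref{thm:finite-scale-intro} as given. Fix $n\ge2$ and let $M$ carry a hyperbolic metric and a Riemannian metric $g$. Suppose that for some $r\ge1$,
$$
  V_r(\widetilde M,\widetilde g)<V_r(\Hh^n).
$$
We will show $\Vol_g(M)/\simp{M}\ge c_n$ for an explicit dimensional constant $c_n>0$. Setting $\delta_n=c_n$ then proves the conjecture.

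\textbf{Passing to a finite cover with large systole.} Since $\pi_1(M)$ is a lattice in $\mathrm{Isom}(\Hh^n)$, it is finitely generated and linear, hence residually finite by Mal'cev. The set of nontrivial deck elements $\gamma$ with $d_{\widetilde g}(\widetilde x,\gamma\widetilde x)\le 2r+1$ for some $\widetilde x$ in a compact fundamental domain is finite. So there is a finite-index normal subgroup $\Gamma'$ avoiding all of them. The cover $N=\widetilde M/\Gamma'$ then has $\sys(N,g_N)>2r$, where $g_N$ is the lifted metric. Any loop of length $\le 2r$ in $N$ is null-homotopic, so every ball $B(x,r)\subset N$ is the isometric image of a ball in $\widetilde M$: the covering map is injective on $B(\widetilde x,r)$, since two preimages would be joined by a path of length $<2r$ projecting to a nontrivial loop. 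Hence
$$
  V_r(N)=V_r(\widetilde M,\widetilde g)<V_r(\Hh^n).
$$
If the cover has degree $d$, then $\Vol(N)=d\Vol_g(M)$ and $\simp{N}=d\simp{M}$.

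\textbf{Applying the finite-scale estimate.} Take $R=r$, which is admissible since $r<\frac12\sys(N)$. \Cref{thm:finite-scale-intro} gives
$$
  d\simp{M}\le C_n\frac{d\Vol_g(M)}{r^n}\Bigl[\log\Bigl(e+\frac{V_r(\Hh^n)}{r^n}\Bigr)\Bigr]^n.
$$
We have $V_r(\Hh^n)\le \omega_{n-1}\int_0^r\sinh^{n-1}t\,dt\le A_n e^{(n-1)r}$ for a dimensional constant $A_n$. Since $r\ge1$,
$$
  \log\bigl(e+A_ne^{(n-1)r}\bigr)\le B_n r.
$$
The bracket is therefore at most $B_n^n r^n$, and the factor $r^n$ cancels. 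This yields
$$
  \simp{M}\le C_nB_n^n\Vol_g(M),
$$
that is, $\Vol_g(M)/\simp{M}\ge c_n:=(C_nB_n^n)^{-1}$.

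This cancellation is exactly why a logarithmic finite-scale bound of degree $n$ is needed. The restriction $r\ge1$ is used only to absorb the additive constant $e+A_n$ into $B_nr$. Note also that $\simp{M}>0$ by Gromov--Thurston, so the hypothesis is meaningful.

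\textbf{Main obstacle.} The only genuinely hard input is \cref{thm:finite-scale-intro} itself, which is assumed here. In the deduction the delicate point is the covering step: the systole of the cover must be measured in the lifted metric $g$, not the hyperbolic one. This is handled because the finiteness of short deck translations holds for any $\pi_1$-invariant metric on $\widetilde M$. One must also check that the nonstrict comparison suffices. Since we argue by contradiction with a strict inequality, the conclusion $V_r(\widetilde M,\widetilde g)\ge V_r(\Hh^n)$ holds for all $r\ge1$ whenever $\Vol_g(M)/\simp{M}<\delta_n$.
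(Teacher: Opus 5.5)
Your proposal is correct and follows the paper's proof essentially step for step. The steps are: Mal'cev residual finiteness; a finite normal cover with $\sys>2r$ in the lifted metric; ball lifting to get $V_r(N)=V_r(\widetilde M,\widetilde g)$; multiplicativity of volume and simplicial volume; and the finite-scale estimate at $R=r$, where $r\ge1$ and $V_r(\Hh^n)\le b_n e^{(n-1)r}$ let the $r^n$ factors cancel. The only cosmetic difference is how you bound short deck translations. You use a compact set whose translates cover $\widetilde M$, while the paper uses a single basepoint with $2\diam_g(M)$ of slack, and both rely on normality to conjugate a short translation back to the reference region.
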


\subsection{Proof Structure}

The proof of \cref{thm:finite-scale-intro} has four main steps.  First, we choose
an $R$-separating filtration
$$
  N=Z_n\supset Z_{n-1}\supset\cdots\supset Z_1\supset Z_0.
$$
When $R<\tfrac12\sys(N)$, the filtration-to-coloring construction together with
Gromov's Amenable Reduction Lemma gives
\begin{equation}\label{eq:intro-zero-stratum}
  \simp{N}\le 2^n\#Z_0.
\end{equation}
It therefore suffices to bound the cardinality of the finite zero-stratum
$Z_0$.

Second, we prove a cluster version of the near-minimizing separator estimate.
For a finite subset $A\subset Z_0$, provided the relevant neighborhood of $A$
is contained in an $R$-ball, one has
\begin{equation}\label{eq:intro-cluster}
  \#A\,\frac{t^n}{n!}
  \le \Vol\bigl(\cN_t^N(A)\bigr)+\eta.
\end{equation}
The same filtration is chosen so that this estimate holds simultaneously for
all admissible clusters $A$.  The key point is that the separator can be sliced
by a facewise affine approximation of the distance to $A$ function while
preserving the admissibility and purity of the replacement separator.

Third, a preliminary application of \eqref{eq:intro-cluster} gives a local
cardinality bound for $Z_0$.  We then apply a local padded-partition lemma to the
finite metric space $Z_0$.  It produces clusters of controlled diameter and
padded cores containing at least half of the points, with padding radius
$$
  \tau\asymp_n
  \frac{R}{\log\paren{e+V_R(N)/R^n}}.
$$
The ambient $\tau$-neighborhoods of distinct padded cores are disjoint.

Finally, we apply \eqref{eq:intro-cluster} to every padded core and sum the
resulting inequalities.  Disjointness gives
$$
  \#Z_0\,\tau^n\lesssim_n\Vol(N).
$$
Combining this estimate with \eqref{eq:intro-zero-stratum} yields
\eqref{eq:finite-scale-intro}.   The proof of
\cref{thm:universal-cover-intro} is then completed by the finite-cover argument
described above.

\subsection{Organization of the Paper}

In \cref{sec:preliminaries} we review separating filtrations, simplicial volume,
amenable reduction, and the analytic and polyhedral tools used later.  In
\cref{sec:cluster-slicing} we establish the cluster-slicing estimate for a
near-minimizing separator, and in \cref{sec:zero-cluster} we iterate that
estimate to obtain a simultaneous inequality for every subset of a single
zero-stratum.  The local padded-partition lemma is proved in
\cref{sec:padded}.  These ingredients are combined in
\cref{sec:finite-scale} to prove \cref{thm:finite-scale-intro}.  In
\cref{sec:large-systole} we construct finite covers with large metric systole
and record the ball-lifting lemma.  The universal-cover comparison, and hence
Conjecture \ref{conj}, is proved in \cref{sec:universal-cover}.  

\section{Separating filtrations and simplicial volume}\label{sec:preliminaries}

\subsection{Notation}
Unless explicitly stated otherwise, all base manifolds and all
polyhedra considered below are compact; universal covers need not be compact.  For a Riemannian manifold $(X,h)$, let $d_h$ denote the induced distance and set
$$
  B_h(x,r)=\set{y\in X:d_h(x,y)<r},
  \qquad
  \dist_h(x,A)=\inf_{a\in A}d_h(x,a).
$$
For a subset $A\subset X$, we use the extended-value convention
$$
  \dist_h(x,\varnothing)=+\infty
$$
and define its open $t$-neighborhood by
$$
  \cN_t^h(A)=\set{x\in X:\dist_h(x,A)<t}.
$$
Thus $\cN_t^h(\varnothing)=\varnothing$ for every finite $t$, and in particular $\cN_0^h(A)=\varnothing$.  When the metric is understood from the ambient space, we write $d_X$, $B_X$, $\dist_X$, and $\cN_t^X$ in place of $d_h$, $B_h$, $\dist_h$, and $\cN_t^h$, respectively, and we write $\Vol(E)$ in place of $\Vol_h(E)$ for a measurable subset $E\subset X$.  All balls and neighborhoods are open.  Hausdorff measures on a Riemannian polyhedron are denoted by $\Area_k$; on an $n$-manifold, $\Area_n$ agrees with the Riemannian volume.  For paths, $*$ denotes concatenation, $\overline{\beta}$ denotes the reverse of a path $\beta$, and $\operatorname{length}(\beta)$ denotes the length of $\beta$ in the understood metric.

\subsection{Riemannian polyhedra and separating subpolyhedra}
We use the polyhedral category introduced in \cite{AlpertSeparating}, which in turn builds on the separating-set method of Papasoglu \cite{Papasoglu}; see also \cite{Nabutovsky}.

A finite polyhedron $P$ is called a \emph{pure $d$-dimensional Riemannian polyhedron} when it is pure of dimension $d$ and its maximal simplices carry smooth Riemannian metrics agreeing on common faces.  Whenever such a polyhedron is smoothly embedded in a Riemannian manifold, we equip it facewise with the metric induced from the ambient manifold.  Its intrinsic polyhedral length distance then dominates the ambient distance; in particular, the restriction to $P$ of an ambient distance function is $1$-Lipschitz.

Throughout, a piecewise smooth path in a finite Riemannian polyhedron means a continuous path admitting a finite subdivision of its parameter interval such that every restricted path is smooth and is contained in one closed simplex.  The intrinsic polyhedral distance is the length distance obtained by taking the infimum over paths of this kind.

A \emph{codimension-one subpolyhedron} $Z\subset P$ is a pure $(d-1)$-dimensional polyhedron that is a subcomplex of a smooth triangulation refining $P$.  It is also required to satisfy the \emph{smooth-nesting condition}: the relative interior of every face of $Z$ lies in the relative interior of a face of the original polyhedron $P$ of strictly larger dimension.

For $R>0$, a codimension-one subpolyhedron $Z\subset P$ is \emph{$R$-separating} if every connected component of $P\setminus Z$ is contained in an ambient metric ball of radius $R$.  An \emph{$R$-separating filtration} of a closed $n$-manifold $N$ is a chain
$$
  N=Z_n\supset Z_{n-1}\supset\cdots\supset Z_1\supset Z_0
$$
in which $Z_i$ is an $R$-separating codimension-one subpolyhedron of $Z_{i+1}$.

\begin{convention}\label{conv:empty}
If every connected component of $P$ is already contained in an $R$-ball, we allow the empty set as an $R$-separating subpolyhedron of $P$ and assign it area zero.  This convention is equivalent to adjoining the empty competitor to the infimum and only removes a vacuous edge case.  It is compatible with the filtration and coloring arguments.
\end{convention}

For a pure $d$-polyhedron $P$, define
$$
  I_R(P)=\inf\set{\Area_{d-1}(Y):Y\subset P\text{ is $R$-separating}}.
$$
We say that $Z$ is \emph{$\eps$-minimizing} if
$$
  \Area_{d-1}(Z)\le I_R(P)+\eps.
$$
The infimum need not be attained; approximate minimizers are sufficient throughout.

\subsection{Simplicial volume and amenable reduction}
For a real singular chain $c=\sum_j a_j\sigma_j$, its $\ell^1$-norm is
$$
  \norm{c}_1=\sum_j\abs{a_j}.
$$
If $\alpha\in H_k(X;\R)$, the simplicial seminorm of $\alpha$ is
$$
  \norm{\alpha}_1
  =\inf\set{\norm{c}_1:c\text{ is a real singular cycle representing }\alpha}.
$$
For a closed oriented connected $n$-manifold $N$, its \emph{simplicial volume} is
$$
  \simp{N}=\norm{[N]_{\R}}_1.
$$
We refer to \cite{Gromov82,LoehSurvey} for the basic theory, including homotopy invariance and multiplicativity under finite coverings.

The filtration argument uses Gromov's Amenable Reduction Lemma.  Briefly, a singular cycle can be regarded as the image of a finite $\Delta$-complex obtained by gluing its singular simplices along matching faces.  A vertex coloring is called \emph{amenable} when, for every color, the subgroup generated by all monochromatic edges is amenable; the standard formulation also excludes an edge whose two endpoints are the same vertex.  A top-dimensional simplex is \emph{rainbow} if all its vertices have distinct colors.  Amenable reduction bounds the simplicial norm of the represented class by the total absolute coefficient of the rainbow simplices \cite[Section~3.2]{Gromov09}; see also \cite{AlpertKatz}.

\subsection{The zero-stratum controls simplicial volume}
The following elementary extension records explicitly what happens when one of the filtration levels is empty.

\begin{lemma}[Empty-level filtration coloring]\label{lem:empty-level-coloring}
Let
$$
  N=Z_n\supset Z_{n-1}\supset\cdots\supset Z_0
$$
be an $R$-separating filtration of a closed $n$-manifold, with the convention of \cref{conv:empty}.  If $Z_j=\varnothing$ for some $j$, then there is a finite triangulation of $N$ and a coloring of the vertices of its barycentric subdivision with the following properties:
\begin{enumerate}[label=(\roman*),leftmargin=2.5em]
\item no top-dimensional simplex is rainbow;
\item for every color, the union of the edges whose two endpoints have that color is contained in one connected component of
$$
  Z_i\setminus Z_{i-1}
$$
for some $i\in\{0,\ldots,n\}$, where $Z_{-1}=\varnothing$.
\end{enumerate}
\end{lemma}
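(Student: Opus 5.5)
The plan is to adapt the standard filtration-to-coloring construction of Papasoglu, Nabutovsky and Alpert--Katz. The only new feature is that some level $Z_j$ is empty. Since the levels are nested, $Z_j=\varnothing$ forces $Z_i=\varnothing$ for all $i\le j$; in particular $Z_0=\varnothing$.

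\textbf{Triangulation.} We first choose a smooth triangulation $T$ of $N$ in which every nonempty $Z_i$ is a subcomplex. The smooth-nesting condition and the definition of codimension-one subpolyhedra (each $Z_i$ is a subcomplex of a triangulation refining $Z_{i+1}$) allow a common refinement by induction downward from $Z_n$. We then pass to the barycentric subdivision $T'$. Each vertex of $T'$ is the barycenter $\hat\sigma$ of a simplex $\sigma$ of $T$. Let $i(\sigma)$ be the unique index with $\operatorname{relint}\sigma\subset Z_i\setminus Z_{i-1}$. This index is well defined because each $Z_i$ is a subcomplex, so the relative interior of $\sigma$ lies in the smallest $Z_i$ containing $\sigma$.

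\textbf{Coloring.} Give $\hat\sigma$ the color
$$\bigl(i(\sigma),\ \text{the component of } Z_{i(\sigma)}\setminus Z_{i(\sigma)-1} \text{ containing } \operatorname{relint}\sigma\bigr).$$
Property (ii) then follows from the following observation. An edge of $T'$ joins $\hat\sigma$ to $\hat\tau$ with $\sigma\subsetneq\tau$. If both endpoints have the same color, then $i(\sigma)=i(\tau)=i$. The open segment from $\hat\sigma$ to $\hat\tau$ lies in $\operatorname{relint}\tau$, and $\hat\sigma\in\operatorname{relint}\sigma$, so the closed edge lies in $Z_i\setminus Z_{i-1}$. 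Since both endpoints lie in the same component, the whole monochromatic edge lies in that one connected component. To have finitely many colors we may, as in the standard construction, relabel colors so that the relevant components are distinct. No merging of components is needed, since (ii) asks for exactly this component-wise containment.

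\textbf{No rainbow simplices (property (i)).} This is the crux, and it is the point where emptiness is used. A top simplex of $T'$ corresponds to a flag $\sigma_0\subsetneq\cdots\subsetneq\sigma_n$ with $\dim\sigma_k=k$. Its $n+1$ vertices can carry at most as many distinct level indices as there are nonempty strata among $Z_0\setminus Z_{-1},\dots,Z_n\setminus Z_{n-1}$. Because $Z_0=\varnothing$, at most the $n$ indices $1,\dots,n$ occur. Moreover, $i(\sigma_k)$ is nondecreasing in $k$ and satisfies $i(\sigma_k)\ge k$. The second fact holds because each $Z_i$ is pure $i$-dimensional and hence cannot contain $\operatorname{relint}$ of a $k$-simplex with $k>i$.

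By pigeonhole, two vertices $\hat\sigma_k,\hat\sigma_l$ with $k<l$ share a level $i$. By monotonicity, every vertex strictly between them also has level $i$. Then $\sigma_k\subset\sigma_l$ with both relative interiors in $Z_i\setminus Z_{i-1}$, and the segment joining their barycenters stays in $Z_i\setminus Z_{i-1}$, as shown above. So the two vertices lie in the same component and receive the same color, and the simplex is not rainbow.

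The main obstacle I expect is the bookkeeping in the triangulation step: producing one triangulation of $N$ in which all the $Z_i$ are simultaneously subcomplexes, given that the definition only provides nested refinements. I would handle this by induction, pushing each refinement of $Z_i$ up to a subdivision of the ambient triangulation of $Z_{i+1}$ and then of $N$. Smooth-nesting is what guarantees these ambient subdivisions exist.
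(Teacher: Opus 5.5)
Your proposal is correct and follows essentially the same route as the paper. You color each barycenter by its level index together with the component of $Z_{i(\sigma)}\setminus Z_{i(\sigma)-1}$, show that a barycentric edge between two vertices of equal level index stays in that difference (so the two vertices get the same color), and conclude by pigeonhole, since $Z_0=\varnothing$ leaves at most $n$ level indices for $n+1$ vertices; your extra facts that $i(\sigma_k)$ is nondecreasing and $i(\sigma_k)\ge k$ are true but not needed.
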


\begin{proof}
Choose a common finite triangulation $K$ adapted to all nonempty filtration levels, as in the refinement step in the proof of \cite[Lemma~4]{AlpertSeparating}; an empty level imposes no additional condition.  Thus every nonempty $Z_i$ is a subcomplex of $K$.  For each simplex $\sigma$ of $K$, let
$$
  i(\sigma)=\min\set{i:\sigma\subset Z_i}.
$$
This index exists because $Z_n=N$.  Since $Z_{i(\sigma)-1}$ is a subcomplex and $\sigma$ is not contained in it, the relative interior $\sigma^\circ$ is contained in
$$
  Z_{i(\sigma)}\setminus Z_{i(\sigma)-1}.
$$
Let $C(\sigma)$ be the connected component of this difference that contains the barycenter $b_\sigma$.  Color the vertex $b_\sigma$ of the barycentric subdivision $\operatorname{sd}K$ by the pair
$$
  \bigl(i(\sigma),C(\sigma)\bigr).
$$

We first verify the compatibility that is implicit in the usual filtration coloring.  Suppose that $\sigma\subset\tau$ and
$$
  i(\sigma)=i(\tau)=i.
$$
For $0<s\le1$, the point
$$
  (1-s)b_\sigma+s b_\tau
$$
lies in $\tau^\circ$.  Because $Z_{i-1}$ is a subcomplex and $\tau\not\subset Z_{i-1}$, this relative interior is disjoint from $Z_{i-1}$.  The endpoint $b_\sigma$ is also outside $Z_{i-1}$ by the definition of $i(\sigma)$.  Hence the whole barycentric edge $[b_\sigma,b_\tau]$ is contained in
$$
  Z_i\setminus Z_{i-1}.
$$
It follows that $C(\sigma)=C(\tau)$.  Consequently, two vertices of one barycentric simplex that have the same level index have the same color, and every monochromatic edge is contained in the corresponding component $C(\sigma)$.  This proves (ii).

A top-dimensional simplex of $\operatorname{sd}K$ has $n+1$ vertices and corresponds to a strictly nested chain of simplices of $K$.  If it were rainbow, the compatibility just proved would force its $n+1$ vertices to have pairwise distinct level indices.  But $Z_j=\varnothing$ implies that no simplex has level index $j$; indeed, the indices $0,\ldots,j$ are all absent.  Thus fewer than $n+1$ level indices occur, and a rainbow $n$-simplex is impossible.  This proves (i).
\end{proof}

The following proposition is the precise result from the amenable-reduction and filtration machinery.

\begin{proposition}[Alpert--Gromov, trivial-image form]\label{prop:zero-stratum}
Let $N$ be a closed oriented connected $n$-manifold with an $R$-separating filtration
$$
  N=Z_n\supset Z_{n-1}\supset\cdots\supset Z_0.
$$
Suppose that, for every metric $R$-ball $B\subset N$, the homomorphism
$$
  \pi_1(B)\longrightarrow\pi_1(N)
$$
induced by inclusion is trivial.  Then
\begin{equation}\label{eq:zero-stratum}
  \simp{N}\le 2^n\#Z_0.
\end{equation}
\end{proposition}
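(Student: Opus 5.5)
We plan to run the standard filtration-to-coloring argument and then apply Gromov's Amenable Reduction Lemma, as in \cite[Lemma~4]{AlpertSeparating}. There are two cases. If some level $Z_j$ is empty, we use \cref{lem:empty-level-coloring} directly. Otherwise all levels are nonempty and we use the analogous coloring of the barycentric subdivision of a common triangulation $K$ adapted to all $Z_i$: the vertex $b_\sigma$ gets the color $(i(\sigma),C(\sigma))$. The compatibility argument in the proof of \cref{lem:empty-level-coloring} applies verbatim. So every monochromatic edge lies in a single component of some $Z_i\setminus Z_{i-1}$. Moreover, a rainbow top simplex of $\operatorname{sd}K$ must have vertices of level indices $0,1,\dots,n$ exactly once each. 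In particular, its vertex of index $0$ is a point of $Z_0$, which is a $0$-simplex of $K$.

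\textbf{Amenability.} The key geometric step is to check that the coloring is amenable. In fact we aim to show the subgroup generated by each color is trivial. Each component $C$ of $Z_i\setminus Z_{i-1}$, for $i\ge1$, lies in a component of $Z_i\setminus Z_{i-1}$. Since $Z_{i-1}$ is $R$-separating in $Z_i$, this set lies in an ambient $R$-ball $B$. For $i=0$ the component is a single point. To make sense of ``subgroup generated by monochromatic edges'' we do the following:
\begin{enumerate}[label=(\alph*),leftmargin=2.5em]
\item fix a basepoint and a maximal tree;
\item join each color class to the basepoint by a chosen path;
\item observe that any loop made of monochromatic edges of one color, conjugated by the chosen path, is a loop in $C\subset B$.
\end{enumerate}
By hypothesis $\pi_1(B)\to\pi_1(N)$ is trivial, so these loops are null-homotopic in $N$. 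The generated subgroup is therefore trivial, hence amenable. We also need to rule out loop edges. For this we work with the singular fundamental cycle mapped via a $\pi_1$-straightening, or equivalently with $\operatorname{sd}K$ as a simplicial complex, whose edges have distinct endpoints. This bookkeeping, namely identifying the abstract $\Delta$-complex of a cycle with $\operatorname{sd}K$ and showing the color subgroups are images of $\pi_1(C)$, is where I expect the main care to be needed.

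\textbf{Counting rainbow simplices.} Amenable reduction now bounds $\simp{N}$ by the total coefficient of rainbow simplices in the fundamental cycle given by $\operatorname{sd}K$, with coefficients $\pm1$. In the all-nonempty case, a rainbow $n$-simplex corresponds to a full flag $\sigma_0\subset\sigma_1\subset\cdots\subset\sigma_n$ with $i(\sigma_k)=k$. Here $\sigma_0$ is a point of $Z_0$, and each $\sigma_k$ is a $k$-simplex of $K$ contained in $Z_k$ but not in $Z_{k-1}$. Now fix $\sigma_{k-1}$. By the smooth-nesting condition, $Z_{k-1}$ is locally a codimension-one subpolyhedron whose faces sit in the interiors of higher faces of $Z_k$. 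Using this and purity, I would argue that near the relative interior of $\sigma_{k-1}$ the space $Z_k$ is a $k$-manifold in which $\sigma_{k-1}$ lies in a codimension-one face. Hence at most two $k$-simplices of $Z_k$ contain $\sigma_{k-1}$. If needed, we first refine $K$ so that this local two-sidedness holds. Multiplying over $k=1,\dots,n$ gives at most $2^n$ flags for each point of $Z_0$, and therefore $\simp{N}\le 2^n\#Z_0$. In the empty-level case there are no rainbow simplices, so $\simp{N}=0\le 2^n\#Z_0$.
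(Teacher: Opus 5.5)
Your proposal is correct and follows the paper's route: the filtration coloring, triviality of each monochromatic $\pi_1$-image via the $R$-ball hypothesis, amenable reduction, and a separate treatment of the empty-level case via \cref{lem:empty-level-coloring}. The only difference is that the paper simply cites Alpert's Lemma~4 for the count of $2^n\#Z_0$ rainbow simplices, whereas you rederive it from smooth nesting; that condition already places $\sigma_{k-1}^\circ$ in the interior of a $k$-face of $Z_k$, so exactly two $k$-simplices contain it and no further refinement of $K$ is needed.
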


\begin{proof}
First suppose that every level of the filtration is nonempty.  The construction in \cite[Lemma~4]{AlpertSeparating} produces a triangulation adapted to the filtration and a vertex coloring for which precisely $2^n\#Z_0$ top-dimensional simplices are rainbow.  It also shows that, for each color, the monochromatic edge complex lies in one connected component of a difference $Z_i\setminus Z_{i-1}$.

If some level is empty, then $Z_0=\varnothing$, and \cref{lem:empty-level-coloring} gives the same containment property with no rainbow top-dimensional simplex.  Thus, in either case, the rainbow mass is at most $2^n\#Z_0$.

For $i\ge1$, every connected component of $Z_i\setminus Z_{i-1}$ is contained in an $R$-ball because $Z_{i-1}$ is $R$-separating in $Z_i$; for $i=0$, every component is a point.  Hence, by the hypothesis on $R$-balls, the homomorphism to $\pi_1(N)$ induced by every monochromatic edge complex is trivial.  In particular, the associated subgroup is amenable.  Gromov's Amenable Reduction Lemma bounds the simplicial norm of the fundamental class by the rainbow mass \cite[Section~3.2]{Gromov09}; an explicit formulation and proof also appear in \cite{AlpertKatz}.  The oriented triangulation represents the fundamental class with coefficients of absolute value one, and \eqref{eq:zero-stratum} follows.
\end{proof}

\begin{remark}
An amenable-image extension is stated in \cite[Lemma~8]{AlpertGrowth}.  The present manuscript only uses the trivial image case above, so no amenable extension of Alpert's filtration-to-coloring argument is needed here.
\end{remark}

We will use the following elementary consequence of the systole assumption.

\begin{lemma}\label{lem:ball-pi1}
If $0<R<\tfrac12\sys(N)$, then the inclusion of every $R$-ball in $N$ induces the trivial homomorphism on fundamental groups.
\end{lemma}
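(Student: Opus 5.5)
The plan is to show directly that every loop in an $R$-ball $B=B(x,R)$ is null-homotopic in $N$. Since $B$ is path-connected and open, $\pi_1(B,y)$ for any basepoint is generated by loops in $B$. It therefore suffices to take an arbitrary loop $\gamma$ in $B$, which after a homotopy inside the open set $B$ we may assume piecewise smooth with finitely many pieces, and show that $\gamma$ is trivial in $\pi_1(N)$.

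The standard approach is to cut $\gamma$ into short triangles through the center. Subdivide $\gamma$ at points $p_0,p_1,\dots,p_k=p_0$ so finely that each arc $\gamma_j$ from $p_{j-1}$ to $p_j$ has length less than $\eps$. Here $\eps>0$ is chosen with
$$
  2R+\eps<\sys(N),
$$
which is possible because $2R<\sys(N)$. For each $j$, let $\beta_j$ be a path from $x$ to $p_j$ of length less than $R$; such a path exists since $d(x,p_j)<R$, and it may be taken piecewise smooth (length distance). Take $\beta_k=\beta_0$. Up to basepoint change, $\gamma$ is then homotopic to the concatenation of the loops
$$
  \ell_j=\beta_{j-1}*\gamma_j*\overline{\beta_j},
$$
all based at $x$, because the intermediate $\overline{\beta_j}*\beta_j$ cancel. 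Each $\ell_j$ has length less than $R+\eps+R<\sys(N)$. By the definition of systole as the infimum of lengths of non-null-homotopic loops, each $\ell_j$ is null-homotopic in $N$, and hence so is $\gamma$. Note that the paths $\beta_j$ may leave $B$; this does not matter, since only triviality in $\pi_1(N)$ is required.

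The only delicate points are routine technicalities, and I expect no real obstacle. First, a free loop of length below $\sys$ is contractible, so the basepoint issue causes no trouble. Second, on a closed Riemannian manifold the distance is realized by the infimum of lengths of piecewise smooth paths, so near-minimizing $\beta_j$ of length strictly less than $R$ exist; the strict inequality $d(x,p_j)<R$ coming from the open ball supplies this slack. Third, an arbitrary continuous loop in the open set $B$ is homotopic within $B$ to a piecewise smooth one, for instance by geodesic or chart-wise smoothing on small subintervals. If $\sys(N)=\infty$, which cannot happen for a closed manifold with nontrivial $\pi_1$ but could in the simply connected case, the conclusion is trivial.
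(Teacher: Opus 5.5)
Your proposal is correct and follows essentially the same argument as the paper: subdivide a piecewise smooth loop into short arcs, cone each arc to the center by near-minimizing paths, and observe that every resulting loop is shorter than $\sys(N)$. The only difference is where the slack comes from. You use the strict inequality $2R<\sys(N)$ to pick $\eps$ with $2R+\eps<\sys(N)$. The paper instead uses compactness of $\gamma$ to find $R'<R$ with $\gamma\subset B_N(p,R')$, then takes arcs shorter than $2(R-R')$, so each loop has length below exactly $2R$.
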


\begin{proof}
Every element of the fundamental group of the open set $B_N(p,R)$ has a piecewise smooth representative, so let $\gamma$ be such a loop.  Compactness of its image gives $R'<R$ with
$$
  \gamma(S^1)\subset B_N(p,R').
$$
Parametrize $\gamma$ by arc length and subdivide it cyclically into consecutive arcs $\alpha_0,\ldots,\alpha_{m-1}$ of length smaller than $2(R-R')$.  If $x_i$ is the initial point of $\alpha_i$, set $x_m=x_0$, choose a minimizing path $\beta_i$ from $p$ to $x_i$, and set $\beta_m=\beta_0$.  Each based loop
$$
  \beta_i*\alpha_i*\overline{\beta_{i+1}}
  \qquad(0\le i<m)
$$
has length strictly smaller than
$$
  R'+2(R-R')+R'=2R<\sys(N),
$$
and hence is null-homotopic in $N$.  Their product is
$$
  \beta_0*\gamma*\overline{\beta_0},
$$
so $\gamma$ is null-homotopic in $N$.  Thus the inclusion-induced homomorphism is trivial.
\end{proof}

\subsection{Analytic and polyhedral results}
We use two standard analytic facts and prove the two elementary polyhedral facts needed in the replacement argument.

\begin{enumerate}[label=(\roman*),leftmargin=2.5em]
\item If $f$ is Lipschitz on a finite-dimensional Riemannian manifold, then for every pair $\alpha,\beta>0$ it has a smooth approximation $h$ satisfying
$$
  \abs{h-f}<\alpha,
  \qquad
  \Lip(h)\le \Lip(f)+\beta.
$$
Indeed, one applies the approximation theorem of \cite{AzagraEtAl} with uniform tolerance $\alpha/2$ and Lipschitz loss $\beta$; its non-strict uniform estimate then implies the displayed strict inequality.

\item The coarea formula is applied face by face on a Riemannian polyhedron.  In particular, if $u$ is $(1+\delta)$-Lipschitz on a pure $d$-polyhedron $P$, then
\begin{equation}\label{eq:coarea-prelim}
  \int_J \Area_{d-1}\paren{u^{-1}(t)}\,dt
  \le (1+\delta)\Area_d\paren{u^{-1}(J)}
\end{equation}
for every interval $J$.  See, for example, \cite[Theorem~13.4.2]{BuragoZalgaller}.
\end{enumerate}

\begin{lemma}[Facewise differential bound]\label{lem:facewise-lipschitz}
Let $K$ be a finite smooth triangulation of a Riemannian polyhedron $P$, and let $u:P\to\R$ be continuous and $C^1$ on every closed simplex.  Suppose that $L\ge0$ and that
$$
  \sup_{x\in\sigma}\norm{d(u|_\sigma)_x}\le L
$$
for every positive-dimensional simplex $\sigma$ of $K$.  Then $u$ is $L$-Lipschitz with respect to the intrinsic polyhedral distance.
\end{lemma}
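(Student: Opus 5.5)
By the definition of the intrinsic polyhedral distance as an infimum over piecewise smooth paths, it is enough to show that
$$
  \abs{u(\gamma(1))-u(\gamma(0))}\le L\cdot\operatorname{length}(\gamma)
$$
for every piecewise smooth path $\gamma:[0,1]\to P$. Taking the infimum over paths joining $x$ to $y$ then gives $\abs{u(x)-u(y)}\le L\,d_P(x,y)$.

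Fix such a path and a subdivision $0=s_0<s_1<\dots<s_m=1$ of the kind in the definition. On each $[s_{k-1},s_k]$ the restriction $\gamma_k$ is smooth and lies in one closed simplex $\sigma$. Because of the telescoping sum
$$
  u(\gamma(1))-u(\gamma(0))=\sum_k\bigl(u(\gamma(s_k))-u(\gamma(s_{k-1}))\bigr)
$$
and the additivity of length, the inequality reduces to a single piece.

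If $\sigma$ is a vertex, then $\gamma_k$ is constant and there is nothing to prove. Otherwise $\sigma$ is positive-dimensional, and $u|_\sigma$ is $C^1$ on the closed simplex, meaning it extends to a $C^1$ function on a neighborhood in a smooth chart. Since $\gamma_k$ is smooth with values in $\sigma$, the composition $u\circ\gamma_k$ is $C^1$, and the chain rule gives
$$
  (u\circ\gamma_k)'(s)=d(u|_\sigma)_{\gamma_k(s)}\bigl(\gamma_k'(s)\bigr).
$$
The velocity $\gamma_k'(s)$ is tangent to $\sigma$, including at boundary points, as a one-sided limit of difference quotients inside $\sigma$.

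The hypothesis gives $\abs{(u\circ\gamma_k)'(s)}\le L\,\abs{\gamma_k'(s)}_{\sigma}$, where the norm is taken in the Riemannian metric on $\sigma$. This is the facewise metric used to measure lengths of paths in $\sigma$, and it agrees with the metrics on common faces. The fundamental theorem of calculus then yields
$$
  \abs{u(\gamma(s_k))-u(\gamma(s_{k-1}))}\le L\operatorname{length}(\gamma_k).
$$

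The only point needing care is at the boundary of $\sigma$. There the differential $d(u|_\sigma)$ must be interpreted through a $C^1$ extension or as a one-sided derivative, and the bound must still apply. This follows by continuity of $d(u|_\sigma)$ on the closed simplex: the supremum over $\sigma$ controls the boundary values as limits of interior values. Lower-dimensional faces need no separate treatment, since a piece lying in a face $\tau\subset\sigma$ can be handled using $\sigma$ itself.
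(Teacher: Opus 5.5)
Your proposal is correct and follows essentially the same route as the paper's proof. Both subdivide a piecewise smooth path into pieces that are smooth inside single closed simplices, bound each increment by $L$ times the piece's length via the chain rule, then sum and take the infimum over paths. Your extra remarks on the vertex case and boundary tangency are harmless, though no limiting argument is needed at boundary points, since the hypothesis already takes the supremum over the whole closed simplex.
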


\begin{proof}
Let $\gamma:[a,b]\to P$ be a piecewise smooth path.  By the convention above, there is a finite subdivision
$$
  a=t_0<t_1<\cdots<t_m=b
$$
and closed simplices $\sigma_1,\ldots,\sigma_m$ such that $\gamma([t_{r-1},t_r])\subset\sigma_r$ and each restricted path is smooth.  The ordinary chain rule on the closed simplex $\sigma_r$ gives
$$
\begin{aligned}
  \abs{u(\gamma(t_r))-u(\gamma(t_{r-1}))}
  &\le
  \int_{t_{r-1}}^{t_r}
  \norm{d(u|_{\sigma_r})_{\gamma(t)}}\,\norm{\gamma'(t)}\,dt \\
  &\le L\,\operatorname{length}\paren{\gamma|_{[t_{r-1},t_r]}}.
\end{aligned}
$$
Summing over $r$ yields
$$
  \abs{u(\gamma(b))-u(\gamma(a))}
  \le L\,\operatorname{length}(\gamma).
$$
Taking the infimum over such paths proves the assertion.
\end{proof}

The next lemma permits us to replace a smooth function by a piecewise-affine function without losing either uniform or Lipschitz control.  Its proof is included because this is the point at which compatibility with the fixed polyhedral structure is needed.

\begin{lemma}[Facewise-affine approximation]\label{lem:facewise-affine}
Let $K$ be a finite smooth triangulation of a compact Riemannian polyhedron $P$, and let $h:P\to\R$ be continuous and smooth on every closed simplex of $K$.  For every $\alpha,\beta>0$ there are a finite smooth refinement $K'$ of $K$ and a continuous function $\ell:P\to\R$, affine in barycentric coordinates on every simplex of $K'$, such that
\begin{equation}\label{eq:affine-approximation}
  \abs{\ell-h}<\alpha,
  \qquad
  \Lip(\ell)\le
  \max_{\substack{\sigma\in K\\ \dim\sigma>0}}
  \sup_{x\in\sigma}\norm{d(h|_\sigma)_x}+\beta.
\end{equation}
The differential norm is computed with the facewise Riemannian metric; if $P$ is zero-dimensional, the maximum is understood as zero.
\end{lemma}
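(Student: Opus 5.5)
The plan is to take $\ell$ to be the piecewise-affine interpolant of $h$ on a sufficiently fine iterated subdivision of $K$. We then compare its differential on each small simplex with the differential of $h$ there, and finish with \cref{lem:facewise-lipschitz}. Since $h$ is smooth on every closed simplex of $K$ and $P$ is compact, each restriction $h|_\sigma$ extends smoothly past $\sigma$ in a chart. Hence $h|_\sigma$ has uniformly continuous derivatives, and $h$ is uniformly continuous on $P$. Let
$$
  L_0=\max_{\sigma\in K,\ \dim\sigma>0}\sup_{x\in\sigma}\norm{d(h|_\sigma)_x}.
$$
If $P$ is zero-dimensional, take $\ell=h$ and stop. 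Otherwise, let $K'=\operatorname{sd}^mK$ be the $m$-th iterated barycentric subdivision, or an edgewise (Freudenthal) subdivision, which is better for shape control. This is a smooth refinement of $K$. Define $\ell$ to be the function that agrees with $h$ at the vertices of $K'$ and is affine in barycentric coordinates on each simplex of $K'$. Then $\ell$ is continuous, because interpolations on a common face agree. The uniform estimate $\abs{\ell-h}<\alpha$ follows once the mesh of $K'$ is small enough: on a simplex $\tau$ of $K'$, the value $\ell$ is a convex combination of values of $h$ at points within $\operatorname{mesh}(\tau)$.

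The Lipschitz estimate is the main point. Fix a simplex $\sigma$ of $K$ and identify it with a standard affine simplex $\Delta$ via its smooth structure, so that $h|_\sigma$ is a smooth function $H$ on $\Delta$ and the facewise metric is a smooth metric $g_\sigma$ on $\Delta$. Take a simplex $\tau\subset\Delta$ of $K'$ that is affine in these coordinates. On $\tau$, $d\ell$ is the constant covector determined by the differences $H(v_i)-H(v_0)$. By the mean value theorem, each difference equals $dH_{\xi_i}(v_i-v_0)$ for some $\xi_i$ on the edge. Subtracting $dH_{v_0}$, we get $|(d\ell-dH_{v_0})(v_i-v_0)|\le\omega(\operatorname{diam}\tau)\,|v_i-v_0|$, where $\omega$ is the modulus of continuity of $dH$.

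We must convert these edge estimates into an operator-norm estimate on the whole tangent space. This requires a uniform bound on the distortion of the edge frame $\{v_i-v_0\}$, that is, a fatness (non-degeneracy) bound on the simplices of $K'$. I expect this to be the main obstacle, since iterated barycentric subdivision degenerates. The first thing I would try is edgewise (Freudenthal--Kuhn) subdivision of each affine simplex. It produces simplices from finitely many similarity classes, so their shape constants are uniform in $m$, and it is compatible on common faces, so it gives a genuine subdivision of $K$. With the fatness constant $c$ fixed, we obtain
$$
  \norm{d\ell-dH_{v_0}}\le c\,\omega(\operatorname{mesh})\to0.
$$
Because the metrics $g_\sigma$ are uniformly continuous and comparable to the Euclidean metric, this yields $\norm{d\ell}_{g_\sigma}\le\norm{dH_{v_0}}_{g_\sigma}+\beta\le L_0+\beta$ for $m$ large.

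The same estimate applies to the lower-dimensional faces of $\tau$: the restriction of $\ell$ to a face is the interpolant of $h$ restricted to the corresponding face of $K$, whose differential is bounded by $L_0$ as well. Finally, apply \cref{lem:facewise-lipschitz} to $\ell$ on $K'$ with $L=L_0+\beta$. This gives $\Lip(\ell)\le L_0+\beta$, with respect to the intrinsic polyhedral distance, which is the second inequality of \eqref{eq:affine-approximation}.
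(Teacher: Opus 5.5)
Your proposal is correct and follows essentially the same route as the paper: interpolate $h$ at the vertices of an $m$th edgewise subdivision of each reference simplex. The uniform shape regularity of that subdivision, combined with the fundamental theorem of calculus (your mean value step), makes the interpolant's differential uniformly close to $d(h|_\sigma)$; after the Euclidean-versus-facewise metric comparison, \cref{lem:facewise-lipschitz} finishes the proof. One small imprecision: faces of a small simplex $\tau$ lying in the interior of an old simplex $\sigma$ are not faces of $K$, but the bound still holds for them, because restricting the covector $d(\ell|_\tau)$ to the tangent space of such a face, with the induced facewise metric, cannot increase its norm.
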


\begin{proof}
For every closed $k$-simplex $\sigma$ of $K$, choose its smooth characteristic map
$$
  \phi_\sigma:\Delta^k\longrightarrow\sigma
$$
and put $g_\sigma=h\circ\phi_\sigma$.  The pullback of the facewise metric is a smooth positive-definite metric on the compact simplex $\Delta^k$.

For an integer $m\ge1$, take the standard $m$th edgewise subdivision of every abstract simplex.  These subdivisions agree on common faces, their mesh tends to zero as $m\to\infty$, and they are uniformly shape regular in each fixed dimension: if
$$
  S=[v_0,\ldots,v_k]
$$
is a $k$-simplex in the subdivision of $\Delta^k$ and $A_S$ is the matrix with columns $v_i-v_0$, then
\begin{equation}\label{eq:shape-regular}
  \norm{A_S^{-T}}\,\diam S\le c_k
\end{equation}
for a constant $c_k$ independent of $m$ and $S$.  This follows directly from the usual edgewise construction, because after rescaling there are only finitely many simplex shapes in each dimension.

At a subdivision vertex represented by $v\in\Delta^k$, define
$$
  \ell_m\bigl(\phi_\sigma(v)\bigr)=g_\sigma(v)
$$
and extend affinely in the barycentric coordinates of every subdivided simplex.  These definitions agree on common faces, so $\ell_m$ is continuous on $P$.  If $x\in S$ has barycentric coordinates $\lambda_0,\ldots,\lambda_k$, then
$$
  (\ell_m\circ\phi_\sigma)(x)-g_\sigma(x)
  =\sum_{i=0}^k\lambda_i\bigl(g_\sigma(v_i)-g_\sigma(x)\bigr).
$$
Uniform continuity of the finitely many functions $g_\sigma$ therefore gives
\begin{equation}\label{eq:uniform-interpolation}
  \norm{\ell_m-h}_{L^\infty(P)}\longrightarrow0.
\end{equation}

We also record the derivative estimate.  Let $p_S$ be the Euclidean gradient of the affine interpolant of $g_\sigma$ on $S$.  For $x\in S$ and $1\le i\le k$, the fundamental theorem of calculus gives
\begin{align*}
  \left\langle p_S-\nabla g_\sigma(x),v_i-v_0\right\rangle
  &={}g_\sigma(v_i)-g_\sigma(v_0)
      -\left\langle\nabla g_\sigma(x),v_i-v_0\right\rangle\\
  &=\int_0^1
    \left\langle
      \nabla g_\sigma\bigl(v_0+s(v_i-v_0)\bigr)-\nabla g_\sigma(x),
      v_i-v_0
    \right\rangle ds.
\end{align*}
Let $\omega_\sigma(r)$ be the modulus of continuity of $\nabla g_\sigma$.  Combining the last display with \eqref{eq:shape-regular} yields
\begin{equation}\label{eq:gradient-interpolation}
  \sup_{x\in S}\norm{p_S-\nabla g_\sigma(x)}
  \le c_k\sqrt{k}\,\omega_\sigma(\diam S).
\end{equation}
Hence the differentials of the affine interpolants converge uniformly, on every old simplex and in every face dimension, to the differential of $h$.  Since there are only finitely many simplices and the Euclidean and pullback Riemannian covector norms are uniformly equivalent on each of them, \eqref{eq:gradient-interpolation} implies that, for all sufficiently large $m$,
$$
  \sup_{x\in\tau}\norm{d(\ell_m|_\tau)_x}
  \le
  \max_{\substack{\sigma\in K\\ \dim\sigma>0}}
  \sup_{x\in\sigma}\norm{d(h|_\sigma)_x}+\beta
$$
for every positive-dimensional simplex $\tau$ of the edgewise refinement.

Applying \cref{lem:facewise-lipschitz} to the edgewise refinement gives the same upper bound for $\Lip(\ell_m)$ with respect to the intrinsic polyhedral distance.  Choose $m$ large enough that this estimate and \eqref{eq:uniform-interpolation} with error $<\alpha$ both hold, and set $K'=K^{(m)}$ and $\ell=\ell_m$.

Finally, every new closed simplex is affine in the barycentric coordinates of one old simplex, and its characteristic map into $P$ is the composition of that affine embedding with the old smooth characteristic map.  Thus $K'$ is a finite smooth refinement of $K$.
\end{proof}

We next record the elementary convex-geometric fact that controls the carrier faces created by affine cuts.

\begin{lemma}[Carrier faces for affine cuts]\label{lem:affine-carrier-faces}
Let $\tau$ be a $k$-simplex in an affine space, where $k\ge1$.
\begin{enumerate}[label=(\roman*),leftmargin=2.5em]
\item If $F$ is a face of $\tau$ and $H$ is an affine hyperplane, then $F\cap H$ is either empty or a face of $\tau\cap H$.
\item Let $\mathcal A$ be a finite family of affine hyperplanes, and let $\mathcal Q(\tau,\mathcal A)$ be the polytopal complex obtained by cutting $\tau$ by all hyperplanes in $\mathcal A$ and taking all faces.  Let $\sigma$ be a face of $\tau$, let $C$ be a cell of the induced subdivision of $\sigma$, and suppose that for some $H\in\mathcal A$,
$$
  C\subset H,
  \qquad
  H\cap\sigma^\circ\ne\varnothing,
  \qquad
  \sigma\not\subset H.
$$
Then $C$ is a face of a $(k-1)$-dimensional cell of $\mathcal Q(\tau,\mathcal A)$ contained in $\tau\cap H$.
\end{enumerate}
\end{lemma}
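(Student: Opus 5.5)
The plan is to prove both parts with the standard face description of a convex polytope: a face of a polytope $Q$ is $Q$ itself or a set $Q\cap\{\phi=c\}$, where $\phi$ is an affine function with $\phi\le c$ on $Q$.

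\emph{Part (i).} The simplex $\tau$ is an intersection of finitely many half-spaces $\{\lambda_j\ge0\}$, one for each barycentric coordinate. A face $F$ of $\tau$ is the set where the coordinates $\lambda_j$ with $j$ in some index set $J$ vanish. The set $\tau\cap H$ is a convex polytope, possibly of smaller dimension, and $F\cap H=(\tau\cap H)\cap\{\sum_{j\in J}\lambda_j=0\}$. The affine function $\sum_{j\in J}\lambda_j$ is nonnegative on $\tau\cap H$. Hence, whenever $F\cap H$ is nonempty, it is a face of $\tau\cap H$, by the supporting-hyperplane description of faces. If $J=\varnothing$ we get $F=\tau$, and the face is all of $\tau\cap H$. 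The same computation works when a family of hyperplanes has already cut $\tau$: intersecting a face of a polytope with a supporting functional again gives a face.

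\emph{Part (ii).} This is the substantive step.
\begin{enumerate}[label=(\arabic*),leftmargin=2.5em]
\item The hypotheses $H\cap\sigma^\circ\neq\varnothing$ and $\sigma\not\subset H$ imply that $H$ meets $\tau^\circ$: pick $x\in H\cap\sigma^\circ$ and note that $\tau^\circ$ accumulates at $x$. They also imply that $H$ is transverse to $\tau$, i.e.\ $\tau\not\subset H$ and $H$ separates $\tau$. Consequently $\tau\cap H$ is a $(k-1)$-dimensional convex polytope.
\item The complex $\mathcal Q(\tau,\mathcal A)$ restricts to $\tau\cap H$ as the subdivision of $\tau\cap H$ by the remaining hyperplanes of $\mathcal A$ other than $H$. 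The $(k-1)$-cells of $\mathcal Q$ contained in $\tau\cap H$ are exactly the top cells of this restricted subdivision. They cover $\tau\cap H$, and they form a polytopal complex whose cells are faces of cells of $\mathcal Q$.
\item The cell $C$ is a cell of the induced subdivision of $\sigma$, which is the restriction of $\mathcal Q$ to $\sigma$, so $C$ is a cell of $\mathcal Q$. Since $C\subset H\cap\sigma\subset\tau\cap H$, we want $C$ to be a face of some top cell $D$ of the subdivision of $\tau\cap H$. I would choose a relative-interior point $y$ of $C$ and take $D$ to be a top cell containing $y$.
\item The closed cells of $\mathcal Q$ form a polytopal complex, so $D\cap C$ is a common face of both. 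The cell of $\mathcal Q$ whose relative interior contains $y$ is $C$ itself. This point is where uniqueness of relative-interior carriers is needed. Since $y\in D$, the face of $D$ carrying $y$ is a cell of $\mathcal Q$ whose relative interior contains $y$, and therefore equals $C$. So $C$ is a face of $D$.
\end{enumerate}

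\emph{Main obstacle.} The hardest point is justifying step (4) rigorously: that cutting a simplex by finitely many hyperplanes, and taking all faces, produces a genuine polytopal complex in which every point lies in the relative interior of exactly one cell. I would prove this by encoding each cell by a sign vector $(\operatorname{sgn}\phi_H)_{H\in\mathcal A}$ together with the set of vanishing barycentric coordinates. The relative interiors of cells are then exactly the nonempty sets with fixed sign vector and fixed zero pattern. These sets partition $\tau$, and each closed cell is the closure of such a set, which gives the claim.

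This also shows where the hypothesis $\sigma\not\subset H$ enters. If $\sigma\subset H$, the affine cuts would contribute nothing on $\sigma$, and $C$ need not arise from an $H$-cut. Transversality (step (1)) is exactly what guarantees that $\tau\cap H$ has dimension $k-1$, so that the $(k-1)$-dimensional cells containing $C$ exist.
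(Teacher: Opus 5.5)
Your proposal is correct and follows essentially the same route as the paper: part (i) restricts a supporting functional of $\tau$ to $\tau\cap H$. Part (ii) uses transversality of $H$ to get $\dim(\tau\cap H)=k-1$, passes to the hyperplane subdivision of $\tau\cap H$, and applies the polytopal-complex intersection property to a relative-interior point of $C$ lying in a top cell $D$, forcing $C$ to be a face of $D$. Two points need tightening. First, you assert without proof that the $(k-1)$-cells cover $\tau\cap H$; the paper proves this by approximating a point by points of the relative interior that avoid every hyperplane trace and passing to a single full-dimensional chamber. Second, your step (1) is better justified, as in the paper, by noting that the defining function of $H$ is nonconstant on $\sigma$ and vanishes at a point of $\sigma^\circ$, hence changes sign there.
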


\begin{proof}
For (i), write the face $F$ as the locus where a supporting affine functional for $\tau$ attains its minimum.  Restricting that functional to $\tau\cap H$ shows that $F\cap H$ is a face of the section.

For (ii), first note that a finite affine-hyperplane subdivision of a convex polytope $Q$ is pure of dimension $\dim Q$.  Hyperplanes that contain the affine hull of $Q$ do not cut $Q$ and may be discarded.  Let $D$ be any cell and choose $x\in D^\circ$.  The union of the remaining hyperplane traces has empty relative interior in $Q$, so points of $Q^\circ$ outside all those traces occur arbitrarily close to $x$.  Choose a sequence of such points converging to $x$.  There are only finitely many full-dimensional chambers, so a subsequence lies in one chamber $E^\circ$.  Then $x\in E$.  Since intersections of cells in a polytopal complex are common faces, $D\cap E$ is a face of $D$ containing $x\in D^\circ$; hence $D\cap E=D$, and $D$ is a face of the full-dimensional cell $E$.

The hypotheses on $H$ imply that the restriction of an affine defining function for $H$ changes sign on $\sigma$ near a point of $H\cap\sigma^\circ$.  Thus $H$ cuts the relative interior of $\tau$ and is not a supporting hyperplane of $\tau$.  Consequently,
$$
  \dim(\tau\cap H)=k-1.
$$
The traces on $\tau\cap H$ of the hyperplanes in $\mathcal A$ form a finite affine-hyperplane subdivision of this $(k-1)$-polytope.  Because the subdivision restricts compatibly to every face of $\tau$, the cell $C$ is a cell of this restricted complex.  Applying the preceding purity argument inside $\tau\cap H$ shows that $C$ is a face of a $(k-1)$-dimensional cell contained in $\tau\cap H$.
\end{proof}

We next subdivide a piecewise-affine level.  Unlike a smooth regular-level argument, this construction is completely relative to the given triangulation and is smooth on every closed new simplex.

\begin{lemma}[Subdivision by a piecewise-affine level]\label{lem:pl-level-subdivision}
Let $K$ be a finite smooth triangulation of a pure $d$-dimensional polyhedron $P$, where $d\ge1$, let $L\subset K$ be a subcomplex, and let $\ell:P\to\R$ be continuous and affine on every simplex of $K$.  If
$$
  t\notin\set{\ell(v):v\text{ is a vertex of }K},
$$
then $K$ has a finite smooth refinement $K_t$ such that the induced refinement of $L$ is a subcomplex of $K_t$ and each of
$$
  \ell^{-1}(t),\qquad
  \set{\ell\le t},\qquad \set{\ell\ge t}
$$
is the support of a subcomplex of $K_t$.  Moreover, $\ell^{-1}(t)$ is either empty or pure of dimension $d-1$.  More precisely, if a simplex of the level subcomplex has relative interior contained in the relative interior of a $k$-simplex of $K$, then its dimension is at most $k-1$.
\end{lemma}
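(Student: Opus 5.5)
The plan is to cut every simplex of $K$ by the hyperplane $\{\ell=t\}$ and then triangulate the result compatibly. Work simplex by simplex via the smooth characteristic maps $\phi_\tau:\Delta^k\to\tau$. On each closed simplex $\tau$ of $K$, the function $\ell\circ\phi_\tau$ is affine in barycentric coordinates, so $H_\tau=\set{\ell\circ\phi_\tau=t}$ is either empty on $\Delta^k$, or all of $\Delta^k$ (when $\ell$ is constant equal to $t$ on $\tau$), or an affine hyperplane section. The case $\ell|_\tau\equiv t$ is excluded for $\dim\tau\ge0$, because then every vertex of $\tau$ would have value $t$, contrary to the hypothesis. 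Since $t$ is not a vertex value, $H_\tau$ contains no vertex. Hence whenever $H_\tau$ meets $\tau$, it meets $\tau^\circ$ and cuts $\tau$ into two convex cells $\set{\ell\le t}\cap\tau$ and $\set{\ell\ge t}\cap\tau$ with common face $\tau\cap H_\tau$. These cuts are compatible on faces, because the restriction of the affine function to a face is the affine function of the face. So we obtain a polytopal complex $\mathcal Q$ refining $K$, in which $\ell^{-1}(t)$, $\set{\ell\le t}$ and $\set{\ell\ge t}$ are unions of cells.

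Next, triangulate $\mathcal Q$ without new vertices on existing cells, using the standard ordered (pulling) triangulation: fix a total order on the vertices of $\mathcal Q$ and triangulate each cell by coning from its least vertex over the inductively triangulated faces not containing it. This is compatible on common faces, so it yields a simplicial complex $K_t$ whose simplices are affine in barycentric coordinates of old simplices. Composing with the old $\phi_\tau$ shows that $K_t$ is a finite smooth refinement, exactly as at the end of the proof of \cref{lem:facewise-affine}. Every cell of $\mathcal Q$ is triangulated by a subcomplex, so the three sets are supports of subcomplexes. The induced refinement of $L$ is a subcomplex, since each cell of $\mathcal Q$ lies in one old simplex and $L$ is a union of old simplices.

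For the dimension statements, consider a simplex $s$ of the level subcomplex with $s^\circ\subset\tau^\circ$, $\dim\tau=k$. Then $s\subset\tau\cap H_\tau$. This has dimension at most $k-1$, because $\ell$ is non-constant on $\tau$ (if $k=0$, then $\tau$ is a vertex and the level misses it, so the case is vacuous). This proves the refined bound. In particular, $\dim\ell^{-1}(t)\le d-1$.

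Purity is the main obstacle. Let $s$ be a simplex of the level complex, carried by $\sigma$ with $s^\circ\subset\sigma^\circ$, so that $s\subset H$ and $H\cap\sigma^\circ\neq\varnothing$, $\sigma\not\subset H$. Since $P$ is pure, $\sigma$ is a face of some $d$-simplex $\tau$, and the affine function extends over $\tau$ with hyperplane $H$. Now apply \cref{lem:affine-carrier-faces}(ii) with $\mathcal A=\set{H}$ to the cell of $\sigma$ containing $s$. It shows that this cell is a face of a $(d-1)$-cell $\tau\cap H$. Because the ordered triangulation of a polytope is pure and restricts to faces, $s$ is then a face of a $(d-1)$-simplex of $K_t$ inside $\ell^{-1}(t)$. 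The care needed is in checking that the pulling triangulation of $\tau\cap H$ restricts to the one already chosen on its faces; this holds because the construction depends only on the global vertex order.
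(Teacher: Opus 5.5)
Your proof is correct and follows the paper's argument. It uses the same polytopal complex, obtained by cutting each old simplex by $\{\ell=t\}$ in its reference coordinates. It bounds the level dimension the same way, via non-constancy of $\ell$ on carriers, and proves purity the same way, by applying \cref{lem:affine-carrier-faces} with the one-hyperplane arrangement in a $d$-simplex containing the carrier. The only difference is that the paper triangulates the polytopal complex by barycentric subdivision, which makes face compatibility and preservation of purity automatic, whereas your pulling triangulation avoids new vertices at the cost of the standard restriction-to-faces check you mention.
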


\begin{proof}
In the affine reference coordinates of every closed simplex $\sigma$ of $K$, cut $\sigma$ by the hyperplane $\ell=t$ and take all nonempty faces of the three convex polytopes
$$
  \sigma\cap\set{\ell\le t},\qquad
  \sigma\cap\set{\ell=t},\qquad
  \sigma\cap\set{\ell\ge t}.
$$
Because the restrictions of $\ell$ agree on common faces, these polytopes and all of their faces form a finite polytopal complex $\mathcal Q_t$ refining $K$.  Take the barycentric subdivision of $\mathcal Q_t$.  Each resulting simplex is an affine simplex in the reference coordinates of a single old simplex of $K$.  Composing its affine inclusion with the old characteristic map therefore makes it a smooth closed simplex in $P$.  The resulting complex $K_t$ is a finite smooth refinement of $K$.

Every old simplex, every simplex of $L$, the level, and each closed side are unions of cells of $\mathcal Q_t$; hence the induced refinement of $L$ is a subcomplex of $K_t$, while the level and each closed side are supports of subcomplexes of $K_t$.  The dimension assertion follows immediately: inside the relative interior of a $k$-simplex on which the level is nonempty, the affine hyperplane section has dimension $k-1$.

It remains to check purity.  Let $Q$ be any cell of the level polytopal complex, and let $\sigma$ be its unique old carrier, characterized by
$$
  Q^\circ\subset\sigma^\circ.
$$
Put $H=\set{\ell=t}$ in the affine reference simplex containing $\sigma$.  Then $Q\subset H$ and $H\cap\sigma^\circ\ne\varnothing$.  Moreover, $\sigma\not\subset H$, because otherwise all vertices of $\sigma$ would have value $t$, contrary to the hypothesis on $t$.  Choose a $d$-simplex $\tau$ of $K$ containing $\sigma$, which is possible because $P$ is pure.  Applying \cref{lem:affine-carrier-faces} to the one-hyperplane arrangement $\{H\}$ shows that $Q$ is a face of a $(d-1)$-dimensional cell contained in $\tau\cap H$.  Thus every level cell is a face of a $(d-1)$-dimensional level cell.  The level polytopal complex is therefore pure of dimension $d-1$, and barycentric subdivision preserves purity.
\end{proof}

The preceding subdivision also gives a direct nonemptiness argument for the class of separating competitors.

\begin{lemma}[Existence of separating competitors]\label{lem:separators-exist}
Let $P\subset M$ be a compact pure $d$-dimensional smooth Riemannian subpolyhedron of a Riemannian manifold, where $d\ge1$.  For every $R>0$, there exists an $R$-separating codimension-one subpolyhedron of $P$.  In particular,
$$
  0\le I_R(P)<\infty.
$$
\end{lemma}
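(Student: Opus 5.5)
The plan is to build an explicit competitor as a union of level sets of piecewise-affine functions. Fix a finite smooth triangulation $K$ of $P$, which exists because $P$ is a compact smooth subpolyhedron. By compactness, choose finitely many points $p_1,\dots,p_m\in M$ such that the balls $B_M(p_j,R/2)$ cover $P$. The restriction of $d_M(p_j,\cdot)$ to $P$ is $1$-Lipschitz. Using approximation (i) on a neighborhood in $M$, followed by \cref{lem:facewise-affine}, I obtain functions $\ell_j$ that are continuous and affine on every simplex of a common refinement $K'$ and satisfy $\abs{\ell_j-d_M(p_j,\cdot)}<R/8$. To get a common refinement, I would apply \cref{lem:facewise-affine} successively, each time to the refinement produced at the previous step. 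Affineness on a simplex survives further refinement, because every new simplex is affine in the reference coordinates of an old one.

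Next, for each $j$ choose a value $t_j\in(R/2,3R/4)$ that avoids the finitely many vertex values of $\ell_j$. Then apply \cref{lem:pl-level-subdivision} successively. The level $\ell_1^{-1}(t_1)$ becomes a subcomplex of $K_1$, and $\ell_2$ is still affine on the simplices of $K_1$. Before each step I re-choose $t_j$ to avoid the vertex values of the current triangulation, which is possible since there are finitely many. The lemma keeps the earlier levels as subcomplexes, since they form the subcomplex $L$. Set $Z=\bigcup_j\ell_j^{-1}(t_j)$. Each level is either empty or a pure $(d-1)$-dimensional subcomplex, so $Z$ is a pure $(d-1)$-dimensional subcomplex of a smooth refinement. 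If $Z$ is empty, I instead check that \cref{conv:empty} applies.

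To see that $Z$ is $R$-separating, take a component $U$ of $P\setminus Z$ and a point $x\in U$ with $x\in B_M(p_j,R/2)$. Then $\ell_j(x)<R/2+R/8<t_j$ after shrinking the tolerance; I would use error $R/16$ and $t_j\in(5R/8,3R/4)$. Since $U$ is connected and avoids $\ell_j^{-1}(t_j)$, we get $\ell_j<t_j$ on all of $U$. Hence $d_M(p_j,\cdot)<3R/4+R/16<R$ on $U$, so $U\subset B_M(p_j,R)$. The area of $Z$ is finite because $Z$ is a finite union of smooth compact simplices. Therefore $0\le I_R(P)<\infty$.

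The main obstacle is the smooth-nesting condition. A face of $Z$ must have relative interior inside the relative interior of a face of $P$ of strictly larger dimension. \cref{lem:pl-level-subdivision} gives exactly this for a single level: a level simplex whose interior lies in the interior of a $k$-simplex of the current triangulation has dimension at most $k-1$. The difficulty is that the carriers must be measured relative to the \emph{original} faces of $P$, while the triangulations get refined along the way and a level simplex may sit inside an earlier level. To handle this, I would take the original face $F$ of $P$ carrying a given simplex $Q$. Since $t_j$ avoids vertex values, $\ell_j$ is non-constant on the relevant simplex, so $Q$ is a proper hyperplane section of it. This makes $\dim Q<\dim F$. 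If this becomes delicate, the fallback is to perform all the cuts simultaneously, using the finite hyperplane arrangement $\{\ell_j=t_j\}$ in each simplex together with \cref{lem:affine-carrier-faces}(ii).
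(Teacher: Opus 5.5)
Your proof is correct, but it takes a different route from the paper's.

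\emph{What the paper does.} It uses no distance functions. It takes the global barycentric coordinates $\lambda_v$ of a triangulation $K$ refining the original structure of $P$. These are already affine on every simplex, and the paper cuts every simplex of $K$ simultaneously by the grid hyperplanes $\lambda_v=j/m$, $1\le j\le m-1$.
\begin{itemize}
\item Separation comes from compactness alone. $\Lambda=(\lambda_v)_v$ is a homeomorphism onto its image with uniformly continuous inverse. On a complementary component every $\lambda_v$ stays in one open grid interval, so for $m$ large each component has ambient diameter $<R$.
\item Purity comes from a single application of \cref{lem:affine-carrier-faces}(ii) to the whole arrangement.
\item Smooth nesting comes from the same carrier-dimension count you describe.
\end{itemize}

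\emph{What each route buys.} The paper's version is more economical. It needs no smoothing step, no iterated use of \cref{lem:facewise-affine}, and no re-choosing of levels after each refinement. Its separator is automatically nonempty, since $d\ge1$ and $m\ge2$, so \cref{conv:empty} is never invoked. Your version gives an explicit metric mechanism: a one-line sign argument traps each component in a specific ball $B_M(p_j,R)$. It also reuses exactly the toolkit of the later cluster-slicing argument (smoothing, facewise-affine approximation, \cref{lem:pl-level-subdivision}). Your fallback of cutting simultaneously by the arrangement $\{\ell_j=t_j\}$ is essentially the paper's mechanism, applied to different functions.

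\emph{Two small points to state explicitly.}
\begin{itemize}
\item Your initial $K$ must refine the original polyhedron structure of $P$. Otherwise the carrier count $\dim Q\le\dim\kappa-1\le\dim F-1$ does not go through.
\item A possibly empty $Z$ is harmless. Your sign argument shows \cref{conv:empty} applies, and \cref{lem:cluster-zero} only invokes this lemma when the empty competitor is inadmissible.
\end{itemize}
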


\begin{proof}
Fix a finite smooth triangulation $K$ refining the original polyhedron structure of $P$, and let $V$ be its vertex set.  For $v\in V$, let
$$
  \lambda_v:P\longrightarrow[0,1]
$$
be the global barycentric coordinate of $v$: on a simplex not containing $v$ it is zero, and on a simplex containing $v$ it is the usual affine coordinate.  The map
$$
  \Lambda:P\longrightarrow[0,1]^V,
  \qquad
  \Lambda(x)=\bigl(\lambda_v(x)\bigr)_{v\in V},
$$
is a continuous injection.  Since $P$ is compact, it is a homeomorphism onto its image, and its inverse is uniformly continuous.  Choose an integer $m\ge2$ so large that
\begin{equation}\label{eq:barycentric-small-box}
  \norm{\Lambda(x)-\Lambda(y)}_\infty<\frac1m
  \quad\Longrightarrow\quad
  d_M(x,y)<R.
\end{equation}

In the affine reference simplex of every simplex of $K$, cut simultaneously by all hyperplanes
$$
  \lambda_v=\frac jm,
  \qquad
  v\in V,
  \quad 1\le j\le m-1,
$$
and take all faces of all resulting convex polytopes.  The cuts agree on common faces, so they form a finite polytopal complex refining $K$.  Its barycentric subdivision is a finite smooth refinement $K_m$: as in \cref{lem:pl-level-subdivision}, every new simplex is affine in one old reference simplex and therefore maps smoothly into $P$.

Let $Y$ be the union of all the grid levels
\begin{equation}\label{eq:grid-separator}
  Y=\bigcup_{v\in V}\bigcup_{j=1}^{m-1}
  \lambda_v^{-1}\paren{\frac jm}.
\end{equation}
Then $Y$ is a subcomplex of $K_m$.  It is pure of dimension $d-1$.  To see this, let $Q$ be any cell of the polytopal subcomplex supported on $Y$, and let $\sigma$ be its unique old carrier, so that $Q^\circ\subset\sigma^\circ$.  Choose one grid hyperplane
$$
  H=\set{\lambda_v=j/m}
$$
that contains $Q$.  Because $j/m\in(0,1)$ and the restriction of $\lambda_v$ to an old simplex is either zero or an ordinary barycentric coordinate, the restriction to $\sigma$ is not constant at the value $j/m$.  Hence
$$
  H\cap\sigma^\circ\ne\varnothing,
  \qquad
  \sigma\not\subset H.
$$
Choose a $d$-simplex $\tau$ of $K$ containing $\sigma$.  Apply \cref{lem:affine-carrier-faces} to the full finite grid arrangement in $\tau$.  It follows that $Q$ is a face of a $(d-1)$-dimensional arrangement cell contained in $\tau\cap H$, and this cell is contained in $Y$.  Thus every cell of $Y$ is a face of a $(d-1)$-dimensional cell of $Y$.  Barycentric subdivision preserves this property.

The smooth-nesting condition also follows from the carrier dimensions.  Let $\eta$ be a simplex of $Y$, and let $Q$ be the unique face of the original polyhedron structure of $P$ with $\eta^\circ\subset Q^\circ$.  Choose one grid level in \eqref{eq:grid-separator} containing $\eta$, and let $\sigma$ be the carrier of $\eta^\circ$ in $K$.  Because $K$ refines the original polyhedron structure, $\sigma^\circ\subset Q^\circ$.  The chosen barycentric coordinate is not constant at its grid value on $\sigma$, so
$$
  \dim\eta\le\dim\sigma-1<\dim Q.
$$
Thus $Y$ is a codimension-one subpolyhedron of $P$.

Finally, let $C$ be a connected component of $P\setminus Y$.  For every $v\in V$, continuity implies that $\lambda_v(C)$ is contained in one component of
$$
  [0,1]\setminus\set{\frac1m,\ldots,\frac{m-1}{m}}.
$$
Consequently,
$$
  \norm{\Lambda(x)-\Lambda(y)}_\infty<\frac1m
  \qquad(x,y\in C).
$$
By \eqref{eq:barycentric-small-box}, the ambient diameter of $C$ is strictly smaller than $R$.  Choosing any $p\in C$ gives $C\subset B_M(p,R)$.  Hence $Y$ is $R$-separating.  Its area is finite because it is a finite subpolyhedron, so $I_R(P)<\infty$.
\end{proof}

\section{Slicing near-minimizing separators by cluster neighborhoods}\label{sec:cluster-slicing}

This section contains the main geometric extension of the separator estimate in \cite[Lemma~6]{AlpertSeparating}.  The distance to one point is replaced by the distance to a finite set.

\begin{lemma}[Cluster slicing]\label{lem:cluster-slicing}
Let $P\subset M$ be a pure $d$-dimensional smooth Riemannian subpolyhedron of a closed Riemannian manifold $M$, equipped facewise with the metric induced from $M$, where $d\ge1$.  Let $Z\subset P$ be an $R$-separating codimension-one subpolyhedron satisfying
$$
  \Area_{d-1}(Z)\le I_R(P)+\eps.
$$
Let $A\subset P$ be finite, and let $0\le a<b<R$.  Suppose that there is $q\in M$ such that
\begin{equation}\label{eq:cluster-contained}
  \overline{\cN_b^M(A)}\subset B_M(q,R).
\end{equation}
Then
\begin{equation}\label{eq:cluster-slicing}
  \int_a^b \Area_{d-1}\paren{Z\cap\cN_t^M(A)}\,dt
  \le
  \Area_d\paren{P\cap\bigl(\cN_b^M(A)\setminus\cN_a^M(A)\bigr)}
  +2\eps R.
\end{equation}
\end{lemma}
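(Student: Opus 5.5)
We follow the cut-and-paste strategy of \cite[Lemma~6]{AlpertSeparating}, with the distance to a point replaced by the distance to the finite set $A$. Put
$$
f(x)=\dist_M(x,A),
$$
which is $1$-Lipschitz on $P$. For $t\in(a,b)$ we form the competitor
$$
Z_t=\bigl(Z\setminus\cN_t^M(A)\bigr)\cup\bigl(P\cap f^{-1}(t)\bigr),
$$
that is, we discard the part of $Z$ inside the $t$-neighborhood of $A$ and replace it by the boundary sphere of that neighborhood.

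The first step is to check that $Z_t$ is $R$-separating. Let $C$ be a component of $P\setminus Z_t$. If $C$ meets $\{f<t\}$, then $C\subset\cN_t^M(A)\subset B_M(q,R)$ by \eqref{eq:cluster-contained}, because $C$ cannot cross the level $\{f=t\}$. Otherwise $C\subset\{f>t\}$, where $Z_t$ coincides with $Z$. Then $C$ lies in a component of $P\setminus Z$, which is contained in some $R$-ball.

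Once $Z_t$ is a genuine competitor, near-minimality gives
$$
\Area(Z)\le\Area(Z_t)+\eps,
$$
that is,
$$
\Area_{d-1}\bigl(Z\cap\cN_t\bigr)\le\Area_{d-1}\bigl(P\cap f^{-1}(t)\bigr)+\eps.
$$
Integrating over $t\in(a,b)$ and applying the coarea formula \eqref{eq:coarea-prelim} with $\delta=0$ on $J=[a,b)$ produces the annulus volume plus $\eps(b-a)\le\eps R$.

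The main obstacle is regularity: $f^{-1}(t)$ need not be a codimension-one subpolyhedron. We handle this in three steps.
\begin{enumerate}[label=(\arabic*),leftmargin=2.5em]
\item Smooth $f$ to a function $h$ by approximation fact (i), with $\abs{h-f}<\alpha$ and $\Lip(h)\le1+\beta$. Restricted to a fine triangulation $K$ of $P$ adapted to $Z$, $h$ has facewise differentials bounded by $1+\beta$.
\item Apply \cref{lem:facewise-affine} to obtain $\ell$, affine on a refinement $K'$, with $\abs{\ell-f}<2\alpha$ and $\Lip(\ell)\le1+2\beta$.
\item For all but finitely many $t$ (avoiding vertex values), \cref{lem:pl-level-subdivision} applied with $L$ the subcomplex $Z$ makes $\ell^{-1}(t)$ a pure $(d-1)$-dimensional subcomplex. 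Its carrier dimensions give the smooth-nesting condition.
\end{enumerate}
We then use
$$
Z'_t=\bigl(Z\cap\{\ell\ge t\}\bigr)\cup\ell^{-1}(t).
$$
This union is pure. A small face of $Z\cap\{\ell\ge t\}$ lying in the level is a face of a $(d-2)$-cell of $Z\cap\ell^{-1}(t)$, which is either absorbed or is a face of a level cell, by purity of the level. Separation follows as before, using $\{\ell<t\}\subset\cN_{t+2\alpha}^M(A)$; for $t<b-2\alpha$ this set lies inside $B_M(q,R)$.

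Comparison now yields
$$
\Area\bigl(Z\cap\{\ell<t\}\bigr)\le\Area\bigl(\ell^{-1}(t)\bigr)+\eps
$$
for almost every $t\in(a+2\alpha,b-2\alpha)$. Since $\{\ell<t\}\supset\cN_{t-2\alpha}$, integration together with \eqref{eq:coarea-prelim} applied to $\ell$ (Lipschitz constant $1+2\beta$) gives
$$
\int_{a+2\alpha}^{b-2\alpha}\Area\bigl(Z\cap\cN_{t-2\alpha}\bigr)\,dt\le(1+2\beta)\Area_d\bigl(P\cap\{a<f<b\}'\bigr)+\eps R,
$$
where $\{a<f<b\}'$ denotes $\{a<f<b\}$ enlarged by $2\alpha$ on each side.

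Finally let $\alpha,\beta\to0$. The area of the enlarged annulus tends to the area of the closed annulus $\{a\le f\le b\}$. The level sets of $f$ have $\Area_d$-measure zero for all but countably many values, and in any case we can absorb the slack: first prove the inequality for slightly shrunken $[a',b']\subset(a,b)$, then use monotone convergence on the left side. The slack $2\eps R$ in the statement, compared with $\eps R$ obtained here, leaves room for the $O(\alpha)$ errors and for boundary level effects. This limit bookkeeping and the purity verification of $Z'_t$ are where care is needed; the separation and area comparison are routine.
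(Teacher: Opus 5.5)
Your proposal is correct and follows essentially the same route as the paper: you smooth $\dist_M(\cdot,A)$, take a facewise-affine approximation $\ell$, use the competitor $(Z\cap\{\ell\ge t\})\cup\ell^{-1}(t)$ at non-vertex levels $t$, get separation from $\{\ell<t\}\subset\cN^M_{t+2\alpha}(A)\subset B_M(q,R)$, and finish with near-minimality, facewise coarea and a limit in the approximation parameters. One simplification: since $\abs{\ell-f}<2\alpha$, the shell $\{a+2\alpha<\ell<b-2\alpha\}$ already lies in $P\cap\{a<f<b\}\subset P\cap\bigl(\cN_b^M(A)\setminus\cN_a^M(A)\bigr)$, so you need neither the enlargement nor the shrinking of $[a,b]$; your remark that the $2\eps R$ slack absorbs boundary effects is also not how it works, since the paper, like your argument, actually obtains $\eps R$.
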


\begin{proof}
If $A=\varnothing$, both neighborhood terms vanish and the conclusion is immediate.  Assume henceforth that $A$ is nonempty, and set
$$
  f(x)=\dist_M(x,A),\qquad x\in M.
$$
The function $f$ is $1$-Lipschitz on $M$.  Its restriction to $P$ is also $1$-Lipschitz for the intrinsic polyhedral metric because that metric dominates the ambient distance.  Fix a smooth triangulation $K$ of $P$ for which $Z$ is a subcomplex.

Choose
$$
  0<\delta<\min\set{\frac{b-a}{4},1}.
$$
First apply the smooth-approximation statement with uniform tolerance $\delta/4$ and Lipschitz loss $\delta/4$.  We obtain a smooth function $h=h_\delta:M\to\R$ satisfying
$$
  \abs{h-f}<\frac{\delta}{4},
  \qquad
  \Lip(h)\le1+\frac{\delta}{4}.
$$
Apply \cref{lem:facewise-affine} to $h|_P$, the triangulation $K$, uniform tolerance $\delta/4$, and differential loss $\delta/4$.  It gives a finite smooth refinement $K_\delta$ of $K$ and a continuous function $\ell=\ell_\delta:P\to\R$, affine on every simplex of $K_\delta$, such that
\begin{equation}\label{eq:affine-distance-approx}
  \abs{\ell-f}<\frac{\delta}{2}<\delta,
  \qquad
  \Lip(\ell)\le1+\frac{\delta}{2}<1+\delta.
\end{equation}
Because $K_\delta$ refines $K$, the separator $Z$ is still a subcomplex.  Write
$$
  \whU_t=\set{x\in P:\ell(x)<t},
  \qquad
  \whS_t=\set{x\in P:\ell(x)=t}.
$$
Then
\begin{equation}\label{eq:neighborhood-inclusions}
  P\cap\cN_{t-\delta}^M(A)
  \subset\whU_t
  \subset P\cap\cN_{t+\delta}^M(A).
\end{equation}

\smallskip
\noindent\emph{Good levels and the competitor.}
The set of values of $\ell$ at vertices of the finite complex $K_\delta$ is finite.  Fix
$$
  t\in[a+\delta,b-\delta]
$$
outside that finite set and define
\begin{equation}\label{eq:competitor}
  Z_t'=
  \paren{Z\cap\set{\ell\ge t}}
  \cup\whS_t.
\end{equation}
Apply \cref{lem:pl-level-subdivision} to $K_\delta$, with $L$ equal to the subcomplex representing $Z$.  The resulting finite smooth refinement has $Z$, the level, and both closed sides as supports of subcomplexes.  Hence $Z_t'$ is the support of a subcomplex of a smooth refinement of $P$.

\smallskip
\noindent\emph{Purity and smooth nesting.}
The level $\whS_t$ is empty or pure of dimension $d-1$ by \cref{lem:pl-level-subdivision}.  We verify that adjoining the retained part of $Z$ creates no lower-dimensional maximal simplex.  Let $\lambda$ be a simplex of $Z_t'$ of dimension less than $d-1$, and choose $x\in\lambda^\circ$.  If $\ell(x)=t$, then $\lambda$ is a level simplex and is a face of a $(d-1)$-simplex of the pure level subcomplex.  If $\ell(x)>t$, then $\lambda$ lies in the retained part of $Z$.  A subdivision of the pure complex $Z$ is pure, so points in relative interiors of $(d-1)$-simplices of $Z$ converge to $x$.  By continuity of $\ell$, those points may be chosen in the open set $\set{\ell>t}$.  Since the refined complex is finite, a subsequence lies in one fixed retained $(d-1)$-simplex whose closure contains $x$; the simplicial intersection property then makes $\lambda$ a face of that simplex.  Hence $Z_t'$ is either empty or pure of dimension $d-1$; in the empty case admissibility is understood according to \cref{conv:empty}.

To check smooth nesting, let $\lambda$ be a simplex of $Z_t'$, and let $Q$ be the unique face of the original polyhedron structure of $P$ whose relative interior contains $\lambda^\circ$.  If $\lambda$ lies in the level, let $\kappa$ be the unique simplex of $K_\delta$ whose relative interior contains $\lambda^\circ$.  Then $\kappa^\circ\subset Q^\circ$, and \cref{lem:pl-level-subdivision} gives
$$
  \dim\lambda\le\dim\kappa-1<\dim Q.
$$
Otherwise $\lambda^\circ\subset\set{\ell>t}$ and $\lambda$ lies in $Z$.  Let $\zeta$ be the unique face of the fixed original polyhedron structure of $Z$ whose relative interior contains $\lambda^\circ$.  The face $\zeta$ lies in the same original face $Q$ of $P$, and the smooth-nesting condition for $Z$ gives
$$
  \dim\lambda\le\dim\zeta<\dim Q.
$$
Thus $Z_t'$ satisfies smooth nesting relative to the original polyhedron structure of $P$.

\smallskip
\noindent\emph{The separating property.}
Let $C$ be a connected component of $P\setminus Z_t'$.  Since $\whS_t\subset Z_t'$, the continuous function $\ell-t$ has constant sign on $C$.

If $C\subset\whU_t$, then \eqref{eq:neighborhood-inclusions}, the chosen range of $t$, and \eqref{eq:cluster-contained} give
$$
  C\subset\cN_{t+\delta}^M(A)
  \subset\cN_b^M(A)
  \subset\overline{\cN_b^M(A)}
  \subset B_M(q,R).
$$
If $C\subset\set{\ell>t}$, then $C$ is a connected subset of $P\setminus Z$, because $Z_t'$ agrees with $Z$ on $\set{\ell>t}$.  Hence $C$ lies in a connected component of $P\setminus Z$, and therefore in an ambient $R$-ball.  If $Z_t'$ is nonempty, it is therefore an admissible $R$-separating codimension-one subpolyhedron.  If $Z_t'=\varnothing$, the same componentwise argument shows that every component of $P$ lies in an $R$-ball, so the empty competitor is admissible by \cref{conv:empty}.

Because $t$ is not a vertex value, $\ell$ is not identically equal to $t$ on any simplex of $K_\delta$.  Its level section in every positive-dimensional $(d-1)$-simplex of $Z$ therefore has dimension at most $d-2$; when $d=1$, the zero-dimensional complex $Z$ contains no point of the level.  In either case,
$$
  \Area_{d-1}(Z\cap\whS_t)=0.
$$
Consequently,
\begin{align*}
  \Area_{d-1}(Z)
  &=\Area_{d-1}(Z\cap\whU_t)
    +\Area_{d-1}(Z\cap\set{\ell>t}),\\
  \Area_{d-1}(Z_t')
  &\le
  \Area_{d-1}(Z\cap\set{\ell>t})
  +\Area_{d-1}(\whS_t).
\end{align*}
Since $Z_t'$ is an admissible competitor and
$$
  \Area_{d-1}(Z)\le I_R(P)+\eps,
$$
we obtain
\begin{equation}\label{eq:pointwise-slice}
  \Area_{d-1}(Z\cap\whU_t)
  \le
  \Area_{d-1}(\whS_t)+\eps
\end{equation}
for every $t\in[a+\delta,b-\delta]$ outside the finite set of vertex values.

Put
$$
  E_\delta=\set{x\in P:a+\delta<\ell(x)<b-\delta}.
$$
Integrating \eqref{eq:pointwise-slice} over the interval (the finite exceptional set has measure zero), using $b-a<R$, and applying coarea face by face to the Lipschitz function $\ell$ gives
\begin{align}
  \int_{a+\delta}^{b-\delta}
  \Area_{d-1}(Z\cap\whU_t)\,dt
  &\le
  \int_{a+\delta}^{b-\delta}
  \Area_{d-1}(\whS_t)\,dt+\eps R\notag\\
  &\le
  (1+\delta)\Area_d(E_\delta)
  +\eps R.\label{eq:integrated-affine}
\end{align}
The uniform approximation implies
$$
  E_\delta
  \subset
  P\cap\paren{\cN_b^M(A)\setminus\cN_a^M(A)}.
$$
It also gives
$$
  P\cap\cN_u^M(A)\subset\whU_{u+\delta}
  \qquad(a\le u\le b-2\delta).
$$
After the change of variables $t=u+\delta$,
$$
  \int_a^{b-2\delta}
  \Area_{d-1}(Z\cap\cN_u^M(A))\,du
  \le
  \int_{a+\delta}^{b-\delta}
  \Area_{d-1}(Z\cap\whU_t)\,dt.
$$
The remaining interval $[b-2\delta,b]$ has length $2\delta$, so its contribution is at most
$$
  2\delta\,\Area_{d-1}(Z).
$$
Combining this with \eqref{eq:integrated-affine} yields
\begin{align*}
  \int_a^b\Area_{d-1}(Z\cap\cN_u^M(A))\,du
  &\le
  (1+\delta)
  \Area_d\paren{P\cap\bigl(\cN_b^M(A)\setminus\cN_a^M(A)\bigr)}\\
  &\qquad +\eps R+2\delta\Area_{d-1}(Z).
\end{align*}
Choose any sequence $\delta_k\downarrow0$ satisfying the displayed upper bound and, for each $k$, choose the corresponding smooth and facewise-affine approximations.  The left-hand side and the shell on the right-hand side are independent of those choices.  Since $P$ and $Z$ have finite volume and area, passage to the limit gives the stronger estimate with $\eps R$ in place of $2\eps R$, and hence proves \eqref{eq:cluster-slicing}.  The argument includes $a=0$ under the convention $\cN_0^M(A)=\varnothing$.
\end{proof}

\begin{remark}\label{rem:why-cluster}
The topology of $A$ plays no role in \cref{lem:cluster-slicing}.  The only global condition used in the replacement step is that the entire region removed from the separator is contained in one $R$-ball.  This is why a disconnected finite cluster can be treated exactly as a single center.
\end{remark}

\section{A simultaneous zero-stratum estimate for all clusters}\label{sec:zero-cluster}

We now iterate \cref{lem:cluster-slicing} through a separating filtration.  

\begin{lemma}[Clustered zero-stratum estimate]\label{lem:cluster-zero}
Let $N$ be a closed Riemannian $n$-manifold, let $R>0$, and let $\eta>0$.  There is an $R$-separating filtration
$$
  N=Z_n\supset Z_{n-1}\supset\cdots\supset Z_1\supset Z_0
$$
with the following property.  For every subset $A\subset Z_0$ and every $0<t<R$ such that
\begin{equation}\label{eq:cluster-ball-condition}
  \overline{\cN_t^N(A)}\subset B_N(q,R)
\end{equation}
for some $q\in N$, one has
\begin{equation}\label{eq:cluster-zero}
  \#A\,\frac{t^n}{n!}
  \le \Vol\bigl(\cN_t^N(A)\bigr)+\eta.
\end{equation}
\end{lemma}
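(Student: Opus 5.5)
We build the filtration from the top down, choosing each level as a near-minimizer, and then run the standard induction of \cite[Lemma~6]{AlpertSeparating}, with a point replaced by the cluster $A$.

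\textbf{Construction.} Put $Z_n=N$. Having chosen $Z_{i+1}$, a compact pure $(i+1)$-dimensional Riemannian subpolyhedron, use \cref{lem:separators-exist} to get $I_R(Z_{i+1})<\infty$. Let $Z_i$ be an $\eps_i$-minimizing $R$-separating codimension-one subpolyhedron of $Z_{i+1}$, allowing the empty competitor under \cref{conv:empty}. The tolerances $\eps_i>0$ are fixed in advance and are small depending on $\eta$, $R$ and $n$. Then $Z_0$ is a finite set, so only finitely many clusters $A\subset Z_0$ occur. Still, the estimate has to hold at all levels simultaneously for every such $A$. This is why the near-minimizing property is imposed at every level once and for all, and is not tailored to $A$.

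\textbf{Induction.} Fix $A\subset Z_0$ and $t<R$ satisfying \eqref{eq:cluster-ball-condition}. For $0\le k\le n$ set
$$
  F_k(s)=\Area_k\bigl(Z_k\cap\cN_s^N(A)\bigr),\qquad 0<s\le t,
$$
where $F_0(s)=\#(Z_0\cap\cN_s(A))\ge\#A$ for every $s>0$. The claim to prove inductively is
$$
  F_k(s)\ge \#A\,\frac{s^k}{k!}-c_k(s),
$$
with explicit error terms $c_k$. For the step from $k$ to $k+1$, apply \cref{lem:cluster-slicing} with $P=Z_{k+1}$, $Z=Z_k$, $d=k+1$, $a=0$ and $b=s$. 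The containment hypothesis \eqref{eq:cluster-contained} holds because $\overline{\cN_s(A)}\subset\overline{\cN_t(A)}\subset B_N(q,R)$. This gives
$$
  \int_0^s F_k(u)\,du\le F_{k+1}(s)+2\eps_{k+1}R.
$$
Inserting the inductive lower bound yields
$$
  F_{k+1}(s)\ge\#A\,\frac{s^{k+1}}{(k+1)!}-\int_0^s c_k-2\eps_{k+1}R,
$$
so $c_{k+1}(s)\le s\,\sup c_k+2\eps_{k+1}R$. At $k=n$ with $s=t$, $F_n(t)=\Vol(\cN_t(A))$, and the total error is at most $\sum_i 2\eps_iR\cdot R^{n-i}$ (using $s<R$). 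Choosing the $\eps_i$ so that this sum is below $\eta$ gives \eqref{eq:cluster-zero}. The base case is handled as follows. In \cref{lem:cluster-slicing} with $d=1$, the left side $\int_0^s\#(Z_0\cap\cN_u(A))\,du\ge s\,\#A$, because $A\subset Z_0\cap\cN_u(A)$ for every $u>0$.

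\textbf{Main obstacle.} The error bound must not depend on $\#A$ or on the size of $Z_0$. This is essential, since the filtration has to be fixed before the clusters are chosen. The additive form $+2\eps R$ in \cref{lem:cluster-slicing} is what guarantees this independence, and the constants work out because the tolerances $\eps_i$ are chosen before $Z_0$ exists. The remaining check is that \cref{lem:cluster-slicing} applies to every intermediate polyhedron, including an empty level; in that case $Z_0=\varnothing$ and the statement is vacuous.
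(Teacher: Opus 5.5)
Your proposal is correct and follows the paper's proof essentially verbatim: fix the near-minimizing tolerances in advance, build the filtration top-down, and induct on the level using \cref{lem:cluster-slicing} with $a=0$, $b=s$, absorbing the errors via $s<R$ into $2\sum_j\eps_jR^{n-j}\le\eta$ (the paper takes $\eps_j=\eta/(2nR^{n-j})$). There is one harmless index slip: $Z_k$ is the $\eps_k$-minimizer in $Z_{k+1}$, so the slicing step yields $2\eps_kR$ rather than $2\eps_{k+1}R$ (there is no $\eps_n$), and the rest of your argument goes through unchanged.
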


\begin{proof}
For $j=0,\ldots,n-1$, set
\begin{equation}\label{eq:epsilon-choice}
  \eps_j=\frac{\eta}{2nR^{n-j}}.
\end{equation}
Starting with $Z_n=N$, construct the lower strata in the order
$$
  Z_{n-1},Z_{n-2},\ldots,Z_0.
$$
Suppose that $Z_{j+1}$ has been chosen.  If it is empty, set all remaining lower strata equal to the empty set and stop.  If every connected component of $Z_{j+1}$ is contained in an $R$-ball, take $Z_j=\varnothing$, set all lower strata equal to the empty set, and stop; this is admissible by \cref{conv:empty} and has minimal area zero.  In the remaining case, the empty competitor is not admissible, while \cref{lem:separators-exist} gives $I_R(Z_{j+1})<\infty$.  We may therefore choose a nonempty $Z_j\subset Z_{j+1}$ with $j$-area within $\eps_j$ of that infimum.  This is the approximate-minimization scheme used in the proof of \cite[Lemma~7]{AlpertSeparating}.

Fix, after the filtration has been chosen, a subset $A\subset Z_0$ satisfying \eqref{eq:cluster-ball-condition}.  If $A=\varnothing$, then the left-hand side and the neighborhood volume vanish, so \eqref{eq:cluster-zero} reduces to $0\le\eta$.  Assume $A\ne\varnothing$.  Then $Z_0\ne\varnothing$, so the construction did not stop at an empty level; consequently every $Z_i$ is nonempty and, for $0\le i<n$, the separator $Z_i\subset Z_{i+1}$ was chosen within $\eps_i$ of $I_R(Z_{i+1})$.  For $i=0,\ldots,n$ put
$$
  F_i(u)=\Area_i\paren{Z_i\cap\cN_u^N(A)}.
$$
We prove by induction on $i$ the following statement, with the quantifier in $u$ included in the induction hypothesis: for every $u\in(0,t]$,
\begin{equation}\label{eq:induction-cluster}
  \#A\,\frac{u^i}{i!}
  \le
  F_i(u)
  +2\sum_{j=0}^{i-1}\eps_jR^{i-j}.
\end{equation}
For $i=0$, the sum is empty and
$$
  F_0(u)=\#\paren{Z_0\cap\cN_u^N(A)}\ge\#A
$$
for every $u>0$.

Assume \eqref{eq:induction-cluster} holds for some $i<n$ and every $u\in(0,t]$.  Fix $u\in(0,t]$, apply the induction hypothesis with $s$ in place of $u$, and integrate over $s\in(0,u)$:
\begin{align*}
  \#A\,\frac{u^{i+1}}{(i+1)!}
  &\le
  \int_0^u F_i(s)\,ds
  +2\sum_{j=0}^{i-1}\eps_jR^{i-j}u.
\end{align*}
Because $u\le t$, the compact-containment hypothesis also holds for the $u$-neighborhood.  Applying \cref{lem:cluster-slicing} with
$$
  P=Z_{i+1},\qquad Z=Z_i,\qquad a=0,\qquad b=u
$$
gives
$$
  \int_0^uF_i(s)\,ds
  \le F_{i+1}(u)+2\eps_iR.
$$
Since $u<R$,
$$
  R^{i-j}u\le R^{i+1-j}.
$$
Combining the last three displays proves \eqref{eq:induction-cluster} for $i+1$ and every $u\in(0,t]$.

Taking $i=n$ and $u=t$, we have $F_n(t)=\Vol\bigl(\cN_t^N(A)\bigr)$.  By \eqref{eq:epsilon-choice},
$$
  2\sum_{j=0}^{n-1}\eps_jR^{n-j}
  =2\sum_{j=0}^{n-1}\frac{\eta}{2n}=\eta.
$$
Thus \eqref{eq:induction-cluster} is exactly \eqref{eq:cluster-zero}.
\end{proof}

\begin{remark}[Simultaneity]\label{rem:simultaneity}
The strata depend only on the prescribed errors $\eps_j$.  Once the filtration is fixed, \cref{lem:cluster-slicing} can be applied separately to every subset $A\subset Z_0$.  No genericity choice made for one cluster is used in the construction of the filtration or in the proof for another cluster.  Since $Z_0$ is finite, there is also no set-theoretic issue in the phrase ``every subset.''
\end{remark}

\section{A local padded partition}\label{sec:padded}

We next prove the finite-metric partition lemma that converts local cardinality control into disjoint cluster neighborhoods.  We write
$$
  B_X(x,r)=\set{y\in X:d_X(x,y)<r}.
$$

\begin{lemma}[Local padded partition]\label{lem:padded}
Let $(X,d_X)$ be a finite metric space.  Suppose that for some $\Delta>0$ and integer $D\ge2$,
\begin{equation}\label{eq:local-D}
  \#B_X(x,\Delta)\le D
  \qquad\text{for every }x\in X.
\end{equation}
Set
$$
  L=\max\{1,\log D\},
  \qquad
  s=\frac{\Delta}{16L}.
$$
Then there is a partition $\cP$ of $X$ such that
\begin{enumerate}[label=(\roman*),leftmargin=2.5em]
\item every cluster $C\in\cP$ has $\diam C\le\Delta$;
\item at least $\#X/2$ points $x\in X$ are $s$-padded, meaning
$$
  B_X(x,s)\subset \cP(x),
$$
where $\cP(x)$ is the cluster containing $x$.
\end{enumerate}
\end{lemma}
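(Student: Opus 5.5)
This is a padded decomposition for finite metric spaces with bounded local growth. I will use the Calinescu--Karloff--Rabani ball-carving scheme and argue probabilistically. Since \(X\) is finite, a random partition with \(\mathbb E\,\#\{\text{padded points}\}\ge \#X/2\) has at least one realization with at least \(\#X/2\) padded points, which gives (ii).

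\textbf{Construction.} Choose a uniformly random ordering \(\pi\) of \(X\). Choose a random radius \(\rho\) uniformly from \([\Delta/4,\Delta/2)\). Process the centers \(y\) in the order \(\pi\): the cluster of \(y\) consists of all still-unassigned points \(x\) with \(d_X(x,y)<\rho\). Discard empty clusters. Every point \(x\) is eventually assigned, at the latest when \(y=x\). Every cluster lies in a ball of radius \(\rho<\Delta/2\), so its diameter is less than \(\Delta\), which gives (i).

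\textbf{Padding estimate.} Fix \(x\) and let \(y_1,y_2,\dots\) be the points of \(B_X(x,\Delta)\) sorted by distance to \(x\). Only these points can ever reach \(B_X(x,s)\), because \(\rho+s<\Delta\). Call a center \(y\) the one that \emph{settles} \(x\) if it is the first in \(\pi\) whose \(\rho\)-ball meets \(B_X(x,s)\). The point \(x\) fails to be padded only if the settling center \(y\) satisfies
\[
  d_X(x,y)-s<\rho\le d_X(x,y)+s,
\]
that is, its ball meets \(B_X(x,s)\) without containing it. Otherwise \(B(y,\rho)\supset B_X(x,s)\), and since no earlier center's ball meets \(B_X(x,s)\), the whole ball is assigned to \(y\).

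For \(y=y_k\) to settle \(x\) and cut, two things must happen. First, \(\rho\) must lie in an interval of length \(2s\), which has probability at most \(2s/(\Delta/4)=8s/\Delta\). Second, given \(\rho\) with \(\rho>d_X(x,y_k)-s\), every \(y_j\) with \(j<k\) satisfies \(d_X(x,y_j)\le d_X(x,y_k)<\rho+s\), so all such \(y_j\) also meet \(B_X(x,s)\). Hence \(y_k\) must precede \(y_1,\dots,y_{k-1}\) in \(\pi\), which has probability \(1/k\). Since \(\pi\) and \(\rho\) are independent,
\[
  \Pr[x\text{ not padded}]
  \le \sum_{k=1}^{D}\frac{8s}{\Delta}\cdot\frac1k
  \le \frac{8s}{\Delta}\,(1+\log D).
\]

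\textbf{Arithmetic.} With \(s=\Delta/(16L)\) this becomes \((1+\log D)/(2L)\). This is at most \(1/2\) only when \(1+\log D\le L\), which fails. So the constant needs one more factor of \(2\), and this is where I expect the main obstacle to be. One possible fix is to draw \(\rho\) from \([\Delta/4,\Delta/2]\) with its interval-length calculation done more carefully. The fix I would try first is a sharper harmonic bound: \(H_D\le 1+\log D\le 2L\) holds because \(D\ge2\) gives \(\log D\ge\log 2\), but this only yields a failure probability of at most \(1\). Therefore I would instead restrict \(\rho\) to an interval of length \(\Delta/2\), namely \([0,\Delta/2)\) with the padding argument unchanged. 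This gives \(\Pr[x\text{ not padded}]\le 4s\,H_D/\Delta\le H_D/(4L)\le 1/2\).

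Clusters still have diameter less than \(\Delta\). For small \(\rho\) some points may end up as singletons of their own center, which is harmless. The one check needed is that with \(\rho<s\) the "cut" event is still counted correctly. It is: \(\rho\) then lies within \(s\) of \(d_X(x,y)\) only for \(y\) near \(x\), and the same bound applies. Finally, taking expectations and summing over \(x\in X\) completes the argument.
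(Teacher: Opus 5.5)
\textbf{Gap in the $1/k$ step.} Your final scheme ($\rho$ uniform on $(0,\Delta/2)$, a uniformly random order, a union bound over the settling center, and $H_D\le 1+\log D\le 2L$) does give a failure probability of at most $H_D/(4L)\le\frac12$. However, the step that produces the factor $1/k$ is false as written. You sort candidates by $d_X(x,y)$ and claim that $d_X(x,y_j)<\rho+s$ forces the $\rho$-ball of $y_j$ to meet $B_X(x,s)$. That implication holds in a length space, but not in a finite metric space. A finite metric space is exactly the setting of the lemma: in the application, $X=Z_0$ carries the restricted ambient metric.

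For instance, suppose $B_X(x,s)=\{x,z\}$. Take a point $y_j$ with $\rho\le d_X(x,y_j)<\rho+s$ and $d_X(y_j,z)\ge\rho$; its ball misses $B_X(x,s)$. Take a point $y_k$ farther from $x$ with $d_X(y_k,z)<\rho$; its ball meets $B_X(x,s)$ without containing $x$. Then $y_k$ can settle and cut even when $y_j$ precedes it. So the cutting event is not contained in ``$y_k$ comes first among $y_1,\dots,y_k$.''

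\textbf{Repair.} Sort instead by the distance to the set, $\delta(y)=\min_{z\in B_X(x,s)}d_X(y,z)$.
\begin{itemize}
\item The ball $B_X(y,\rho)$ meets $B_X(x,s)$ iff $\rho>\delta(y)$, and contains it iff $\rho>\max_{z\in B_X(x,s)}d_X(y,z)$.
\item The bad window $(\delta(y),\max_z d_X(y,z)]$ has length $<2s$, by the triangle inequality through $x$.
\item If $y_k$ settles, then every $y_j$ with $j<k$ has $\delta(y_j)\le\delta(y_k)<\rho$. So its ball meets $B_X(x,s)$, and $y_k$ must precede it.
\item Moreover $y_1,\dots,y_k\in B_X(x,\Delta)$ because $\rho+s<\Delta$, so $k\le D$.
\end{itemize}
This recovers your bound $\frac{4s}{\Delta}H_D$. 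Also take $\rho>0$: at $\rho=0$ the open balls are empty and nothing is assigned, though this is a null event.

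\textbf{Comparison with the paper.} With this correction your proof is valid, but its analysis differs from the paper's. The paper keeps $\rho\sim\operatorname{Unif}[\Delta/4,\Delta/2]$ and avoids a union bound. Conditional on $\rho$, if the first point of the ordering in $B_X(x,\rho+s)$ lies in $B_X(x,\rho-s)$, it captures all of $B_X(x,s)$. Hence the cut probability is at most $1-m_x(\rho-s)/m_x(\rho+s)\le\log m_x(\rho+s)-\log m_x(\rho-s)$. Averaging over $\rho$ telescopes to at most $\frac4\Delta\cdot 2s\log D=\frac{\log D}{2L}$.

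This produces $\log D$ instead of $H_D$, which is exactly the factor of $2$ you found missing on the window $[\Delta/4,\Delta/2)$. It also uses only the valid implications $B_X(y,\rho)\cap B_X(x,s)\ne\varnothing\Rightarrow y\in B_X(x,\rho+s)$ and $y\in B_X(x,\rho-s)\Rightarrow B_X(x,s)\subset B_X(y,\rho)$, so the non-geodesic issue never arises. Its price is the requirement $\rho>s$, so that $m_x(\rho-s)\ge 1$. Your harmonic bound tolerates $\rho$ near $0$, and that is what lets you double the window and absorb the extra $+1$ via $1+\log D\le 2L$.
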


\begin{proof}
Choose independently a random radius
$$
  \rho\sim\operatorname{Unif}\brak{\Delta/4,\Delta/2}
$$
and a uniformly random ordering of $X$.  Assign each point $z\in X$ to the first point $y$ in the ordering satisfying $d_X(y,z)<\rho$.  Such a point exists because $y=z$ is eligible.  Points assigned to the same center form a cluster.  If $z,z'$ belong to one cluster, then
$$
  d_X(z,z')<2\rho\le\Delta,
$$
so every cluster has diameter at most $\Delta$.

Fix $x\in X$ and define
$$
  m_x(u)=\#B_X(x,u).
$$
We say that $B_X(x,s)$ is \emph{cut} when it meets more than one cluster of the random partition.  Conditional on $\rho$, let $y_0$ be the first point in the random ordering belonging to $B_X(x,\rho+s)$.  If
$$
  y_0\in B_X(x,\rho-s),
$$
then every $z\in B_X(x,s)$ is assigned to $y_0$.  Indeed,
$$
  d_X(y_0,z)<(\rho-s)+s=\rho,
$$
and any center capable of capturing $z$ lies in $B_X(x,\rho+s)$, so none precedes $y_0$.  Therefore
\begin{equation}\label{eq:cut-conditional}
  \mathbb P\bigl[B_X(x,s)\text{ is cut}\mid\rho\bigr]
  \le
  1-\frac{m_x(\rho-s)}{m_x(\rho+s)}.
\end{equation}
Because $1-u/v\le\log(v/u)$ for $0<u\le v$, averaging \eqref{eq:cut-conditional} gives
\begin{align}
  \mathbb P\bigl[B_X(x,s)\text{ is cut}\bigr]
  &\le
  \frac4\Delta
  \int_{\Delta/4}^{\Delta/2}
  \bigl(\log m_x(\rho+s)-\log m_x(\rho-s)\bigr)\,d\rho.
  \label{eq:cut-integral}
\end{align}
Since $L\ge1$, one has $s\le\Delta/16$.  Hence, throughout the integration range,
$$
  0<\rho-s<\rho+s<\Delta,
$$
so $m_x(\rho-s)\ge1$ and \eqref{eq:local-D} gives
$$
  0\le\log m_x(\rho\pm s)\le\log D.
$$
Moreover,
$$
  \frac{\Delta}{4}+s\le\frac{\Delta}{2}-s,
$$
so the translated intervals overlap and their common part cancels.  Writing $f(u)=\log m_x(u)$, translation of the two integrals in \eqref{eq:cut-integral} therefore gives the exact identity
\begin{align*}
  \int_{\Delta/4}^{\Delta/2}
  \bigl(f(\rho+s)-f(\rho-s)\bigr)\,d\rho
  & =
  \int_{\Delta/2-s}^{\Delta/2+s}f(u)\,du
  -\int_{\Delta/4-s}^{\Delta/4+s}f(u)\,du\\
  &\le 2s\log D.
\end{align*}
Consequently,
$$
  \mathbb P\bigl[B_X(x,s)\text{ is cut}\bigr]
  \le \frac{8s\log D}{\Delta}\le\frac12.
$$
Thus each $x$ is padded with probability at least $1/2$.  The expected number of padded points is at least $\#X/2$, so some realization has at least that many.
\end{proof}

\section{The finite-scale logarithmic inequality}\label{sec:finite-scale}

We now combine the preceding ingredients.

\begin{proof}[Proof of \cref{thm:finite-scale-intro}]
Set
\begin{equation}\label{eq:scales}
\begin{aligned}
  D&=\max\set{2,\left\lceil
  n!8^n\paren{1+\frac{V_R(N)}{R^n}}
  \right\rceil},\\
  L&=\max\{1,\log D\},
  \qquad
  \tau=\frac{R}{512L}.
\end{aligned}
\end{equation}
Choose
\begin{equation}\label{eq:eta-main}
  \eta=\frac{\tau^n}{4n!}
\end{equation}
and let
$$
  N=Z_n\supset\cdots\supset Z_0
$$
be the filtration supplied by \cref{lem:cluster-zero}.  Put
$$
  X=Z_0,
  \qquad
  m=\#X,
$$
Equip $X$ with the restriction of $d_N$.

If $m=0$, \cref{prop:zero-stratum,lem:ball-pi1} give $\simp{N}=0$, and there is nothing to prove.  Assume $m>0$.

\smallskip
\noindent\emph{Local cardinality of $X$.}
For $x\in X$, let
$$
  A_x=X\cap B_N\paren{x,\frac R8}.
$$
Every point of $\overline{\cN_{R/8}^N(A_x)}$ has $d_N$-distance at most $R/4$ from $x$.  Hence
$$
  \overline{\cN_{R/8}^N(A_x)}
  \subset\overline{B_N\paren{x,\frac R4}}
  \subset B_N(x,R).
$$
Applying \cref{lem:cluster-zero} gives
$$
  \#A_x\,\frac{(R/8)^n}{n!}
  \le \Vol\bigl(\cN_{R/8}^N(A_x)\bigr)+\eta
  \le V_R(N)+\eta.
$$
Since $\tau<R$ and hence $\eta<R^n$, we obtain
\begin{equation}\label{eq:local-cardinality}
  \#B_X\paren{x,\frac R8}
  \le
  n!\frac{V_R(N)+R^n}{(R/8)^n}
  =n!8^n\paren{1+\frac{V_R(N)}{R^n}}
  \le D.
\end{equation}

Apply \cref{lem:padded} to $X$ at scale $R/8$.  Since the padding radius supplied by that lemma is
$$
  \frac{R/8}{16L}=\frac{R}{128L}=4\tau,
$$
we may choose a partition $\cP$ into clusters of diameter at most $R/8$ with at least $m/2$ points that are $4\tau$-padded.  For each cluster $C\in\cP$, define its padded core
\begin{equation}\label{eq:padded-core}
  A_C=\set{x\in C:B_X(x,4\tau)\subset C}.
\end{equation}
Then
\begin{equation}\label{eq:half-core}
  \sum_{C\in\cP}\#A_C\ge\frac m2.
\end{equation}
If $C\ne C'$ and $x\in A_C$, $y\in A_{C'}$, then $d_X(x,y)\ge4\tau$; otherwise $y\in B_X(x,4\tau)\subset C$.  Since $d_X$ is the restriction of $d_N$ and $2\tau<4\tau$, the ambient neighborhoods
$$
  \cN_\tau^N(A_C)
$$
of the nonempty cores are pairwise disjoint.

Choose $p_C\in A_C$ whenever $A_C\ne\varnothing$.  Because $\diam A_C\le R/8$,
$$
  \overline{\cN_\tau^N(A_C)}
  \subset
  \overline{B_N\paren{p_C,\frac R8+\tau}}
  \subset B_N(p_C,R).
$$
Thus \cref{lem:cluster-zero} applies to every core:
\begin{equation}\label{eq:core-estimate}
  \#A_C\,\frac{\tau^n}{n!}
  \le
  \Vol\bigl(\cN_\tau^N(A_C)\bigr)+\eta.
\end{equation}
Summing \eqref{eq:core-estimate}, using disjointness, and observing that the number of nonempty cores is at most $m$, we get
$$
  \frac{\tau^n}{n!}\sum_C\#A_C
  \le \Vol(N)+m\eta.
$$
By \eqref{eq:half-core} and \eqref{eq:eta-main},
$$
  \frac{m\tau^n}{2n!}
  \le \Vol(N)+\frac{m\tau^n}{4n!},
$$
and hence
\begin{equation}\label{eq:m-bound}
  m\le4n!\frac{\Vol(N)}{\tau^n}
  =4n!\,512^nL^n\frac{\Vol(N)}{R^n}.
\end{equation}

Since $R<\tfrac12\sys(N)$, \cref{lem:ball-pi1,prop:zero-stratum} imply
$$
  \simp{N}\le2^nm.
$$
Together with \eqref{eq:m-bound}, this gives
\begin{equation}\label{eq:pre-log}
  \simp{N}
  \le
  2^{n+2}n!\,512^nL^n\frac{\Vol(N)}{R^n}.
\end{equation}

It remains to compare $L$ with the logarithm in \cref{thm:finite-scale-intro}.  From the definition of $D$,
$$
  D\le2n!8^n\paren{1+\frac{V_R(N)}{R^n}}.
$$
Also
$$
  \log\paren{e+\frac{V_R(N)}{R^n}}\ge1
  \quad\text{and}\quad
  \log\paren{1+\frac{V_R(N)}{R^n}}
  \le\log\paren{e+\frac{V_R(N)}{R^n}},
$$
so
\begin{align*}
  L
  &\le 1+\log D\\
  &\le 1+\log(2n!8^n)
  +\log\paren{1+\frac{V_R(N)}{R^n}}\\
  &\le \brak{2+\log(2n!8^n)}
  \log\paren{e+\frac{V_R(N)}{R^n}}.
\end{align*}
Substituting this in \eqref{eq:pre-log} proves \cref{thm:finite-scale-intro}.
\end{proof}


\section{Finite covers with large metric systole}\label{sec:large-systole}

We record the covering lemma in the exact metric form needed below.

\begin{lemma}[Large-systole cover]\label{lem:large-systole}
Let $(M,g)$ be a closed connected Riemannian manifold with residually finite fundamental group.  For every $L>0$, there is a finite-sheeted normal cover $p:N\to M$ such that
$$
  \sys(N,p^*g)>L.
$$
\end{lemma}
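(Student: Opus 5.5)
The plan is the standard residual-finiteness argument: only finitely many deck-type elements of $\pi_1(M)$ can have short loop representatives, and a finite-index normal subgroup avoiding them yields the cover.

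Fix a basepoint $x_0\in M$ and lift it to $\tilde x_0\in\widetilde M$. Let $\Gamma=\pi_1(M,x_0)$ act on $(\widetilde M,\tilde g)$ by deck transformations, which are isometries. Consider
$$
  S=\set{\gamma\in\Gamma\setminus\{1\}: d_{\tilde g}(\tilde x_0,\gamma\tilde x_0)\le L+2\diam(M,g)}.
$$
The action is properly discontinuous and $\widetilde M$ is complete, so the closed ball of that radius is compact and meets only finitely many orbit points. Hence $S$ is finite.

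Residual finiteness gives, for each $\gamma\in S$, a finite-index subgroup missing $\gamma$. Passing to its normal core keeps finite index, still misses $\gamma$, and is normal. Intersecting these finitely many normal cores gives a finite-index normal subgroup $H\trianglelefteq\Gamma$ with $H\cap S=\varnothing$. Let $p:N\to M$ be the cover corresponding to $H$, namely $N=\widetilde M/H$ with the metric $p^*g$. This cover is finite-sheeted and normal, and $N$ is closed.

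To bound the systole, take a non-null-homotopic loop $c$ in $N$ of length $\ell$. Lift it to a path in $\widetilde M$ from some point $y$ to $h y$, where $h\in H\setminus\{1\}$ because $c$ is essential. Choose a deck transformation $\gamma\in\Gamma$ with $d(\gamma\tilde x_0,y)\le\diam(M,g)$; this is possible since a fundamental domain can be taken to lie in the $\diam(M)$-ball about each orbit point. Set $h'=\gamma^{-1}h\gamma\in H\setminus\{1\}$, using normality of $H$. Then
$$
  d(\tilde x_0,h'\tilde x_0)=d(\gamma\tilde x_0,h\gamma\tilde x_0)\le d(\gamma\tilde x_0,y)+\ell+d(hy,h\gamma\tilde x_0)\le \ell+2\diam(M).
$$
If $\ell\le L$, then $h'\in S\cap H$, which is a contradiction. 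So every essential loop in $N$ has length greater than $L$.

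It remains to check that the systole, defined as an infimum, is strictly greater than $L$. Finiteness of $S$ already gives room: all distances $d(\tilde x_0,\gamma\tilde x_0)$ over the nontrivial $\gamma$ outside $S$ exceed $L+2\diam(M)$. Alternatively, one can use the standard fact that on a closed manifold the systole is attained by a closed geodesic. Either way the infimum is strictly greater than $L$, which proves the lemma.

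The argument has no substantive obstacle. The main care needed is this strict-inequality and infimum point, together with the conjugation step, which requires $H$ normal and is why normal cores are taken.
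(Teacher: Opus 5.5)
Your proposal is correct and follows essentially the same route as the paper: properness gives a finite set of short orbit displacements, residual finiteness gives a finite-index normal subgroup avoiding it, and conjugating by a deck transformation moves the lifted essential loop next to the basepoint. The only difference is the strict inequality for the infimum. The paper builds in slack by using the radius $(L+1)+2(\diam_g(M)+1)$, so that $\sys\le L$ directly produces an essential loop of length $<L+1$ and hence a contradiction. Your first justification, that finiteness of $S$ ``gives room,'' really needs local finiteness of the orbit at a slightly larger radius, so that the minimum displacement over $H\setminus\{1\}$ is attained and hence strictly exceeds $L+2\diam(M)$. Your alternative via the closed-geodesic fact also closes this.
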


\begin{proof}
Let $\Gamma=\pi_1(M)$ act by deck transformations on the universal cover $(\widetilde M,\widetilde g)$.  Fix $\widetilde x\in\widetilde M$, put
$$
  D=\diam_g(M)+1,
  \qquad
  L'=L+1,
$$
and consider
$$
  F=\set{\gamma\in\Gamma\setminus\{1\}:
  d_{\widetilde g}(\widetilde x,\gamma\widetilde x)\le L'+2D}.
$$
Properness of the deck action makes $F$ finite.  Residual finiteness supplies, for each $\gamma\in F$, a finite-index normal subgroup not containing $\gamma$.  Intersecting these finitely many subgroups gives a finite-index normal subgroup $H\triangleleft\Gamma$ with
$$
  H\cap F=\varnothing.
$$
Let $N=\widetilde M/H$, and let $p:N\to M$ be the induced finite normal covering.

Suppose, contrary to the desired conclusion, that $\sys(N,p^*g)\le L$.  By the definition of the infimum, $N$ then contains a non-null-homotopic loop of length less than $L'=L+1$.  Lifting this loop gives $h\in H\setminus\{1\}$ and $y\in\widetilde M$ with
$$
  d_{\widetilde g}(y,hy)<L'.
$$
Choose $k\in\Gamma$ with $d_{\widetilde g}(\widetilde x,ky)<D$; such a $k$ is obtained by lifting a path of length at most $\diam_g(M)$ between the projections of $y$ and $\widetilde x$.  Normality gives $khk^{-1}\in H$, and $khk^{-1}\ne1$.  Moreover,
$$
  d_{\widetilde g}(khy,khk^{-1}\widetilde x)=d_{\widetilde g}(ky,\widetilde x)<D,
$$
so
\begin{align*}
  d_{\widetilde g}(\widetilde x,khk^{-1}\widetilde x)
  &\le d_{\widetilde g}(\widetilde x,ky)
      +d_{\widetilde g}(ky,khy)
      +d_{\widetilde g}(khy,khk^{-1}\widetilde x)\\
  &<D+L'+D=2D+L'.
\end{align*}
Thus $khk^{-1}\in H\cap F$, a contradiction.  Therefore $\sys(N,p^*g)>L$.
\end{proof}

\begin{lemma}[Balls lift below half the systole]\label{lem:balls-lift}
If a Riemannian covering $\pi:\widetilde N\to N$ satisfies $\sys(N)>2r$, then for every $x\in N$ and every lift $\widetilde x\in\pi^{-1}(x)$, the restriction
$$
  \pi:B_{\widetilde N}(\widetilde x,r)\longrightarrow B_N(x,r)
$$
is a Riemannian isometry.  In particular,
$$
  V_r(N)=V_r(\widetilde N).
$$
\end{lemma}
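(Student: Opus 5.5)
The strategy is to show that $\pi$ restricted to the ball is a bijection onto $B_N(x,r)$ that preserves distances. A local isometry with these properties is a Riemannian isometry. The equality $V_r(N)=V_r(\widetilde N)$ then follows because every ball in $\widetilde N$ is centered at some lift of a point of $N$, and conversely. Throughout I use that $\pi$ is a Riemannian covering, so it preserves lengths of paths. In particular $d_N(\pi y,\pi z)\le d_{\widetilde N}(y,z)$, and $\pi$ maps $B_{\widetilde N}(\widetilde x,r)$ into $B_N(x,r)$.

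\emph{Surjectivity and distance from the center.} Take $y\in B_N(x,r)$. Choose a path from $x$ to $y$ of length less than $r$; since $N$ is complete, a minimizing geodesic also works. Lift it starting at $\widetilde x$. The endpoint $\widetilde y$ lies in $B_{\widetilde N}(\widetilde x,r)$, and $d_{\widetilde N}(\widetilde x,\widetilde y)\le d_N(x,y)$. Combined with the reverse inequality, this gives $d_{\widetilde N}(\widetilde x,\widetilde y)=d_N(x,y)$ for lifts of minimizing geodesics.

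\emph{Injectivity.} Suppose $\widetilde y\neq\widetilde y'$ in the ball have the same image $y$. Join $\widetilde x$ to each of them by paths of length less than $r$, and project both paths to $N$. This gives a loop at $y$ through $x$ of length less than $2r<\sys(N)$, so the loop is null-homotopic in $N$. By the homotopy lifting property, its lift starting at $\widetilde y$ is then a closed loop. But that lift runs from $\widetilde y$ to $\widetilde x$ and then to $\widetilde y'$, so $\widetilde y=\widetilde y'$, a contradiction.

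\emph{Distance preservation.} This is the main obstacle, because the ambient distance in $\widetilde N$ between two points of the ball may not be realized inside the ball. Take $\widetilde y,\widetilde z$ in the ball, and let $\gamma$ be a minimizing path in $N$ from $y=\pi\widetilde y$ to $z=\pi\widetilde z$; its length is $d_N(y,z)<2r$. Lift $\gamma$ starting at $\widetilde y$, ending at some $\widetilde z'$ over $z$. Next, build the loop consisting of a minimizing geodesic from $x$ to $y$, then $\gamma$, then a minimizing geodesic from $z$ back to $x$. Its length is less than $r+2r+r=4r$, which is too long to apply the systole bound directly.

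To get around this, I would subdivide the loop into short loops based at $x$, as in the proof of \cref{lem:ball-pi1}. Pick points along $\gamma$ at spacing small enough, and join $x$ to each by a minimizing geodesic. Every point of $\gamma$ lies within distance $<2r$ of $x$, but that is too far for the subdivision to work. Instead I would use the midpoint: if $\gamma(s)$ is near the middle of $\gamma$, then $d(x,\gamma(s))\le\min\{d(x,y)+s,\,d(x,z)+\ell-s\}$. I expect this bound to be insufficient in general, so I would first try a cleaner route. Each short loop of the form ``$x\to$ point $\to$ next point $\to x$'' has length less than $2r$ only when both points lie in $B_N(x,r)$. So the plan is to show that $\gamma$ stays inside $B_N(x,r)$, or to replace $\gamma$ by a path inside the ball of comparable length. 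If that fails, I would settle for a weaker but sufficient statement. Distances between points of the ball are at most $d_N$ as realized by paths through the ball. Combined with injectivity and the fact that $\pi$ is a local isometry, this shows that $\pi$ restricted to the ball is a diffeomorphism onto $B_N(x,r)$ preserving the Riemannian metric tensor, which is exactly ``Riemannian isometry.''

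The volume identity only needs this weaker statement, because volume depends only on the metric tensor. Specifically, $\Vol(B_{\widetilde N}(\widetilde x,r))=\Vol(B_N(x,r))$ for every $x$ and every lift $\widetilde x$. Taking suprema, and using that every $\widetilde x\in\widetilde N$ is a lift of $\pi(\widetilde x)$, gives $V_r(N)=V_r(\widetilde N)$.
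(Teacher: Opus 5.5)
Your argument is correct and is essentially the paper's proof: surjectivity by lifting a path of length $<r$ from $\widetilde x$, injectivity by projecting a path $\widetilde y\to\widetilde x\to\widetilde y'$ of length $<2r$ and using homotopy lifting, and then the observation that a bijective local isometry is a Riemannian isometry, which is all the volume identity needs. You should delete the distance-preservation paragraph, because that stronger claim is false in general and the paper does not assert it: for $N=\R/3r\mathbb{Z}$ and $\widetilde N=\R$, the points $\pm(r-\eps)$ of the ball are at distance $2r-2\eps$ upstairs but only $r+2\eps$ downstairs.
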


\begin{proof}
Let $y\in B_N(x,r)$.  Since $d_N(x,y)<r$, the definition of the Riemannian distance gives a piecewise smooth path from $x$ to $y$ of length strictly less than $r$.  Lift this path starting at $\widetilde x$.  Its endpoint lies in $B_{\widetilde N}(\widetilde x,r)$, so the displayed restriction is surjective.

Suppose that distinct points $\widetilde y,\widetilde z\in B_{\widetilde N}(\widetilde x,r)$ have the same image.  Choose piecewise smooth paths from $\widetilde y$ to $\widetilde x$ and from $\widetilde x$ to $\widetilde z$, each of length strictly less than $r$, and let $\alpha$ be their concatenation.  Then
$$
  \operatorname{length}(\alpha)<2r,
$$
and $\pi\circ\alpha$ is a loop in $N$.  If this loop were null-homotopic, the homotopy-lifting property would imply that its lift starting at $\widetilde y$ is closed.  But $\alpha$ is that lift and ends at $\widetilde z\ne\widetilde y$.  Hence $\pi\circ\alpha$ is non-null-homotopic.  Since a Riemannian covering preserves the length of paths, this contradicts $\sys(N)>2r$.

Thus the restriction is injective.  It is a bijective local isometry, and therefore a Riemannian isometry.  Every center of either manifold is covered by this argument, so the corresponding ball volumes agree center by center.  Taking suprema proves $V_r(N)=V_r(\widetilde N)$.
\end{proof}

\section{Universal-cover ball growth}\label{sec:universal-cover}

We now prove the conjectural comparison.

\begin{proof}[Proof of \cref{thm:universal-cover-intro}]
The proportionality principle for closed hyperbolic manifolds implies
$$
  \simp{M}>0;
$$
indeed, $\simp{M}$ is a positive dimension-dependent multiple of the hyperbolic volume of $M$ \cite{Gromov82,LoehSurvey}.  Because $M$ admits a closed oriented hyperbolic metric, its hyperbolic holonomy identifies $\Gamma=\pi_1(M)$ with a finitely generated subgroup of
$$
  \mathrm{SO}^+(n,1)\subset\mathrm{GL}_{n+1}(\R).
$$
Mal'cev's theorem therefore implies that $\Gamma$ is residually finite \cite{Malcev}; see also \cite{Nica}.

Fix $r\ge1$ and suppose, toward a contradiction, that
\begin{equation}\label{eq:bad-radius}
  V_r(\widetilde M,\widetilde g)<V_r(\Hh^n).
\end{equation}
By \cref{lem:large-systole}, there is a finite normal cover $p:N\to M$ of degree $d$ such that, for the pulled-back metric,
$$
  \sys(N,p^*g)>2r.
$$
Simplicial volume and Riemannian volume are multiplicative under finite covers \cite{Gromov82,LoehSurvey}, so
\begin{equation}\label{eq:cover-multiplicity}
  \simp{N}=d\simp{M},
  \qquad
  \Vol_{p^*g}(N)=d\Vol_g(M).
\end{equation}
The universal cover of $N$ is canonically the same manifold $\widetilde M$ as the universal cover of $M$, and the lift of $p^*g$ is $\widetilde g$.  Applying \cref{lem:balls-lift} to the universal covering $\widetilde M\to N$ gives
\begin{equation}\label{eq:cover-ball}
  V_r(N,p^*g)=V_r(\widetilde M,\widetilde g).
\end{equation}
Since $r<\tfrac12\sys(N,p^*g)$, \cref{thm:finite-scale-intro} applied to $(N,p^*g)$ at scale $r$ yields
\begin{equation}\label{eq:apply-finite-scale}
  d\simp{M}
  \le
  C_n\frac{d\Vol_g(M)}{r^n}
  \brak{\log\paren{e+\frac{V_r(\widetilde M,\widetilde g)}{r^n}}}^{n}.
\end{equation}

Let $\omega_{n-1}$ be the Euclidean area of the unit $(n-1)$-sphere.  Hyperbolic polar coordinates give
$$
  V_r(\Hh^n)=\omega_{n-1}\int_0^r\sinh^{n-1}(t)\,dt.
$$
Since $\sinh t\le e^t/2$ for $t\ge0$,
\begin{equation}\label{eq:hyperbolic-upper}
  V_r(\Hh^n)
  \le
  b_ne^{(n-1)r},
  \qquad
  b_n=\frac{\omega_{n-1}}{2^{n-1}(n-1)}.
\end{equation}
For $r\ge1$, \eqref{eq:bad-radius} and \eqref{eq:hyperbolic-upper} imply
\begin{align}
  \log\paren{e+\frac{V_r(\widetilde M,\widetilde g)}{r^n}}
  &<\log\paren{e+\frac{V_r(\Hh^n)}{r^n}}\notag\\
  &\le\log\paren{e+b_ne^{(n-1)r}}\notag\\
  &\le K_nr,\label{eq:log-hyp}
\end{align}
where one may take
$$
  K_n=n-1+\log(e+b_n).
$$
Indeed,
$$
  e+b_ne^{(n-1)r}
  \le(e+b_n)e^{(n-1)r},
$$
and $r\ge1$.  Substituting \eqref{eq:log-hyp} in \eqref{eq:apply-finite-scale} and cancelling $d$ gives
$$
  \simp{M}<C_nK_n^n\Vol_g(M).
$$
Since $\simp{M}>0$, every bad radius forces
\begin{equation}\label{eq:ratio-lower}
  \frac{\Vol_g(M)}{\simp{M}}>\frac{1}{C_nK_n^n}.
\end{equation}
Choose
\begin{equation}\label{eq:delta-explicit}
  \delta_n=\frac{1}{C_nK_n^n}.
\end{equation}
Then \eqref{eq:conjecture-four-hypothesis} contradicts \eqref{eq:ratio-lower}.  Since the argument applies to every fixed $r\ge1$, no bad radius exists, proving \eqref{eq:conjecture-four-conclusion}.
\end{proof}

\begin{remark}
The finite cover is allowed to depend on the hypothetical bad radius $r$.  The ratio $\Vol_g(M)/\simp{M}$ is invariant under finite covers, which is exactly why the argument yields a single constant, depending only on the dimension, that is valid simultaneously for all radii.
\end{remark}

\begingroup
\renewcommand{\bibliofont}{\small}
\sloppy
\setlength{\emergencystretch}{2em}

\endgroup

\end{document}